\documentclass{amsart}

\usepackage[T1]{fontenc}
\usepackage{amsfonts,amsthm,mathtools,amsmath,amssymb,mathrsfs}
\usepackage{microtype}
\usepackage[hidelinks]{hyperref}
\hypersetup{
  pdftitle={Algebraic Cauchy transforms of real algebraic ovals: monodromy, topology, and degree},
  pdfauthor={Christian Hagg and Boris Shapiro},
  pdfkeywords={logarithmic potential, Cauchy transform, algebraic oval, monodromy, quadrature domain}
}
\usepackage{enumitem}
\allowdisplaybreaks
\newtheorem{theorem}{Theorem}[section]
\newtheorem{proposition}[theorem]{Proposition}
\newtheorem{lemma}[theorem]{Lemma}
\newtheorem{corollary}[theorem]{Corollary}
\theoremstyle{definition}

\theoremstyle{remark}
\newtheorem{remark}[theorem]{Remark}

\newcommand{\C}{\mathbb C}
\newcommand{\R}{\mathbb R}
\newcommand{\Z}{\mathbb Z}
\newcommand{\Q}{\mathbb Q}
\newcommand{\PP}{\mathbb P}
\newcommand{\bd}{\partial}
\newcommand{\ct}{\mathcal C}
\newcommand{\Res}{\operatorname{Res}}

\newcommand{\ev}{\operatorname{ev}}
\newcommand{\LogPot}{\mathcal U}

\title[Algebraic Cauchy transforms of algebraic ovals]
{Algebraic Cauchy transforms of real algebraic ovals}

\author{Christian H\"agg}
\address{Department of Mathematics, Stockholm University, SE-106 91 Stockholm, Sweden}
\email{hagg@math.su.se}
\author{Boris Shapiro}
\address{Department of Mathematics, Stockholm University, SE-106 91 Stockholm, Sweden}
\email{shapiro@math.su.se}

\subjclass[2020]{31A15, 30E20, 30F10, 14H50, 14P25}
\keywords{logarithmic potential, Cauchy transform, algebraic oval, Schwarz correspondence, monodromy, dividing real curve, quadrature domain}
\date{}
\dedicatory{To the memory of Vladimir Arnold}

\begin{document}

\begin{abstract}
Let $d\mu_\Omega=\frac1\pi\mathbf 1_\Omega\,dA$ be the area measure of a bounded planar domain normalized by the factor $1/\pi$, and let $\LogPot_\Omega$ be its logarithmic potential.  The exterior Cauchy transform $\ct_\Omega=2\partial_z\LogPot_\Omega$ records the complex gradient of the potential outside $\overline\Omega$.  Suppose that $\partial\Omega$ is a smooth oval of a real algebraic curve, and let $\gamma$ be its lift to the normalization $X$ of the relevant irreducible Schwarz correspondence.  Writing $\pi,\eta:X\to\PP^1$ for the two Schwarz coordinates, one has
\[
 \ct_\Omega(z)=\frac{1}{2\pi i}\int_\gamma\frac{\eta\,d\pi}{z-\pi}.
\]
Our main result gives a topological characterization of algebraicity.  If the augmentation representation of the geometric monodromy group of $\pi$ is irreducible---equivalently, if the monodromy is two-transitive---then $\ct_\Omega$ is algebraic over $\C(z)$ if and only if $\gamma$ separates $X$.  On the separating side, a residue formula expresses the transform as an incomplete trace of $\eta$, up to a rational fixed-pole term, and yields an exact orbit-degree formula; for monodromy $S_d$ and a side containing $k$ points of a generic fibre, the degree is $\binom dk$.  We also construct, for every $h\ge1$, a star-shaped real-algebraic Jordan domain whose Schwarz normalization has genus $2h$ and whose exterior Cauchy transform is algebraic but nonrational.  Thus algebraic exterior Cauchy transforms form a strictly larger class than rational quadrature-domain transforms.
\end{abstract}

\maketitle

\section{Introduction}

Let $\Omega\subset\C$ be a bounded domain and let
\[
 d\mu_\Omega(\zeta)=\frac1\pi\mathbf 1_\Omega(\zeta)\,dA(\zeta).
\]
Its logarithmic potential and Cauchy transform are
\begin{equation}\label{eq:potential-cauchy}
 \LogPot_\Omega(z)=\int_\C\log|z-\zeta|\,d\mu_\Omega(\zeta),
 \qquad
 \ct_\Omega(z)=\int_\C\frac{d\mu_\Omega(\zeta)}{z-\zeta}.
\end{equation}
Distributionally, $\Delta\LogPot_\Omega=2\mathbf 1_\Omega$ and $\bar\partial\ct_\Omega=\mathbf 1_\Omega$; on $\C\setminus\overline\Omega$ one has $\ct_\Omega=2\partial_z\LogPot_\Omega$.  Thus the exterior Cauchy transform is the complex gradient of the logarithmic potential generated by uniform area measure.

For a classical quadrature domain, the exterior potential agrees with that of a finite distribution supported inside the domain; equivalently, under the standard regularity assumptions, $\ct_\Omega$ is rational.  This potential-theoretic interpretation is one of the basic links between quadrature identities, inverse potential problems, Schwarz functions, and free-boundary methods; see, for example, \cite{AharonovShapiro,Davis,GustafssonSchottky,GustafssonInverse,Sakai,GustafssonShapiro}.  Recent work on Helmholtz quadrature domains and partial balayage illustrates the continuing role of these ideas in potential analysis \cite{KowLarsonSaloShahgholian,GardinerSjodin}.  In the present paper we remain in the classical logarithmic setting but replace rationality by the weaker requirement that $\ct_\Omega$ be algebraic.

A second motivation comes from the circle of problems on algebraic integrability of ovals originating in Newton's Lemma~XXVIII and revived by Vladimir Arnold and Victor Vassiliev.  Arnold and Vassiliev emphasized algebraicity and monodromy in Newton's problem \cite{ArnoldVassiliev}, while Vassiliev subsequently obtained higher-dimensional nonintegrability results by Picard--Lefschetz and reflection-group methods \cite{VassilievNewton}.  The functional studied here is different: instead of the volume cut off by a moving hyperplane, we consider the complex gradient of the exterior logarithmic potential of a fixed planar domain.  Nevertheless, the organizing question is the same: when does a geometrically defined integral have only finitely many analytic branches?

Assume now that $\partial\Omega$ is a smooth oval contained in the nonsingular locus of the real algebraic curve $P(z,\bar z)=0$.  We normalize the irreducible component of the complexification $P(z,w)=0$ that contains the analytic lift of the oval, and denote the resulting compact Riemann surface by $X$.  The coordinate functions are
\[
 \pi:X\longrightarrow\PP^1_z,
 \qquad
 \eta:X\longrightarrow\PP^1_w,
\]
and the lifted oval is denoted by $\gamma\subset X$.  Along $\gamma$ one has $\eta=\overline\pi$, and the Cauchy--Green formula gives
\begin{equation}\label{eq:intro-lift}
 \ct_\Omega(z)=\frac1{2\pi i}\int_\gamma\frac{\eta\,d\pi}{z-\pi}.
\end{equation}
We refer to the normalized component $(X,\pi,\eta)$ as the relevant Schwarz correspondence of the oval.

Cauchy-type integrals of algebraic functions were studied systematically by Pakovich, Roytvarf, and Yomdin \cite{PRY}; related moment problems on Riemann surfaces and for hyperelliptic Abelian integrals were investigated by Gavrilov and Pakovich \cite{GavrilovPakovich}.  The general criterion of \cite{PRY} characterizes algebraicity by finiteness of the combinatorial monodromy.  The new point here is that, for a closed algebraic oval and under an irreducibility hypothesis on the projection monodromy, this analytic finiteness condition has a direct topological meaning: it is equivalent to the lifted oval being null-homologous, or equivalently separating, on $X$.  In addition, on the separating side we turn the transform into an explicit incomplete trace and compute its algebraic degree.  This supplies effective geometric and algebraic information not contained in the general finiteness criterion.

The following theorem is the effective algebraic statement.  For a meromorphic function $f\in\C(X)$ set
\[
 I_{\gamma,f}(z)=\frac1{2\pi i}\int_\gamma\frac{f\,d\pi}{z-\pi}.
\]
The exterior Cauchy transform corresponds to $f=\eta$.

\begin{theorem}[Residues, incomplete traces, and degree]\label{thm:A}
Let $\pi:X\to\PP^1$ have degree $d$, let $f\in\C(X)$, and suppose that $\gamma=\bd\Sigma$ for an integral two-chain $\Sigma$, with $\gamma$ disjoint from the poles of $f\,d\pi$.  Let $n_\Sigma$ denote the locally constant integer multiplicity of $\Sigma$ on $X\setminus|\gamma|$.  Fix a regular base point $z_0\notin\pi(\gamma)$, away from the images of the poles of $f$, and label the local fibre branches $q_1(z),\ldots,q_d(z)$ near $z_0$.  Put $m_i=n_\Sigma(q_i(z_0))$.  Then, as a germ at $z_0$,
\begin{equation}\label{eq:A-intro}
 I_{\gamma,f}(z)=R_\Sigma(z)-\sum_{i=1}^d m_i f(q_i(z)),
\end{equation}
where $R_\Sigma\in\C(z)$ is the contribution of the fixed poles of $f\,d\pi$.

Let $L$ denote the Galois closure of $\C(X)/\C(\pi)$, and identify the geometric monodromy group $G\le S_d$ with $\operatorname{Gal}(L/\C(z))$ acting on the $d$ sheets.  The degree of $I_{\gamma,f}$ over $\C(z)$ is at most $|G\cdot m|$.  If the evaluation map on the augmentation module
\[
 \ev_f:\Bigl\{(a_i)\in\C^d:\sum_i a_i=0\Bigr\}\longrightarrow L,
 \qquad (a_i)\longmapsto\sum_i a_i f(q_i),
\]
is injective, then the degree is exactly $|G\cdot m|$.

In particular, if $G=S_d$, $f\notin\C(\pi)$, and the distinct values among $m_1,\ldots,m_d$ occur with multiplicities $\nu_1,\ldots,\nu_s$, then
\[
 \deg_{\C(z)} I_{\gamma,f}=\frac{d!}{\nu_1!\cdots\nu_s!}.
\]
For a bordered side $\Sigma$, if precisely $k$ points of a generic fibre lie in $\Sigma$, then
\[
 \deg_{\C(z)} I_{\gamma,f}=\binom dk.
\]
\end{theorem}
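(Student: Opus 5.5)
Three steps: a residue computation via Stokes' theorem on $\Sigma$, a Galois-orbit bound, and a linear-independence argument for exactness.

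\emph{Residue formula.} For fixed $z$ near $z_0$, consider the meromorphic one-form $\omega_z=\frac{f\,d\pi}{z-\pi}$ on $X$. Since $\gamma=\bd\Sigma$ with integer multiplicities $n_\Sigma$, Stokes' theorem applied to the closed-away-from-poles form $\omega_z$ gives
\[
\frac1{2\pi i}\int_\gamma\omega_z=\sum_{p}n_\Sigma(p)\Res_p\omega_z .
\]
We first excise small discs around the poles in the support of $\Sigma$, then let them shrink; the poles avoid $|\gamma|$ by hypothesis. The poles of $\omega_z$ are of two kinds.
\begin{itemize}
\item \emph{Moving poles} at the fibre points $q_i(z)$. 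Because $z_0$ is regular, $\pi$ is a local coordinate there, so $\Res_{q_i(z)}\omega_z=-f(q_i(z))$. Because $z_0\notin\pi(\gamma)$, the multiplicity $n_\Sigma(q_i(z))=m_i$ is constant for $z$ near $z_0$.
\item \emph{Fixed poles}: the poles of $f\,d\pi$, including those over $\infty$. Their residues are rational in $z$, since $1/(z-\pi)$ expands with coefficients polynomial in $1/(z-\pi(p))$. Their weighted sum is $R_\Sigma\in\C(z)$.
\end{itemize}
Adding the two contributions yields \eqref{eq:A-intro}.

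\emph{Degree bound.} Write $I=R_\Sigma-\sum m_i f(q_i)$, so $I\in L$ because each $f(q_i)$ lies in $L$. Each $\sigma\in G$ permutes the $q_i$, fixes $R_\Sigma$, and sends $I$ to $R_\Sigma-\sum_i m_{\sigma^{-1}(i)}f(q_i)$. This conjugate depends only on the permuted vector $\sigma\cdot m$. Hence the conjugates of $I$ over $\C(z)$ number at most $|G\cdot m|$, and so $\deg I\le|G\cdot m|$.

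\emph{Exactness.} Take $m\ne m'$ in the orbit $G\cdot m$. Both vectors have the same coordinate sum, so $m-m'$ lies in the augmentation module. Injectivity of $\ev_f$ then gives $\sum(m_i-m'_i)f(q_i)\ne0$, so distinct orbit points give distinct conjugates. The degree, which equals the number of distinct Galois conjugates, is therefore exactly $|G\cdot m|$.

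\emph{The case $G=S_d$.} Here I need injectivity of $\ev_f$. Its kernel is a $\C[S_d]$-submodule of the augmentation module, which is an irreducible $S_d$-module (the standard representation). Its image therefore lies either in the kernel, making $\ev_f=0$, or else $\ev_f$ is injective. If $\ev_f=0$, then $f(q_1)-f(q_2)=0$, and applying all of $S_d$ gives $f(q_i)=f(q_j)$ for all $i,j$. Then $f=\frac1d\operatorname{tr}f\in\C(\pi)$, contradicting $f\notin\C(\pi)$. Strictly, $\ev_f$ is $\C$-linear into $L$ regarded as a $\C$-vector space on which $G$ acts semilinearly over $\C(z)$ but $\C$-linearly, so the irreducibility argument goes through. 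The orbit of $m$ under $S_d$ has size given by the multinomial coefficient $d!/(\nu_1!\cdots\nu_s!)$.

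\emph{The bordered case.} For a bordered side $\Sigma$, the multiplicities $m_i$ take the value $1$ on $k$ points and $0$ on $d-k$ points. The orbit size is then $\binom dk$, assuming $0<k<d$; otherwise $I$ is rational and the count $1$ still matches.

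\emph{Main obstacle.} The delicate step is the residue bookkeeping for the fixed poles, especially at points over $\infty$ and at branch points, together with verifying that $R_\Sigma$ is genuinely rational and independent of the moving-pole analysis. I would handle it by computing the residue at each pole $p$ of $f\,d\pi$ in a local parameter. If $\pi(p)$ is finite, expand $1/(z-\pi)$ in powers of $(\pi-\pi(p))/(z-\pi(p))$, which is a finite polynomial in $1/(z-\pi(p))$ because the pole order is bounded. If $\pi(p)=\infty$, expand in powers of $\pi/z$, which gives a polynomial in $z$.
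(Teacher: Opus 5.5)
Your proposal is correct and follows essentially the paper's route. It uses the chain residue formula (you excise discs and apply Stokes, where the paper triangulates $\Sigma$ with the poles interior to two-simplices), the bound on Galois conjugates by $|G\cdot m|$, distinctness of the orbit conjugates from injectivity of $\ev_f$ on the augmentation module, and irreducibility of the standard $S_d$-representation together with $f\notin\C(\pi)$ to obtain that injectivity. The only fix is wording: the intended dichotomy is that $\ker\ev_f$ is either zero or the whole augmentation module, and the Galois action on $L$ is $\C(z)$-linear rather than semilinear.
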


The converse is the principal result of the paper.  Since $X$ is irreducible, the geometric monodromy group is transitive; for a transitive permutation group, irreducibility of the complex augmentation representation is equivalent to two-transitivity.

\begin{theorem}[Topological algebraicity criterion]\label{thm:B}
Let $\Gamma\subset\C$ be a smooth Jordan curve and let $\gamma\subset X$ be a closed lift on which $\pi:\gamma\to\Gamma$ is a diffeomorphism.  Let $f\in\C(X)\setminus\C(\pi)$ be regular along $\gamma$, and assume that $f\,d\pi$ has no pole on $\gamma$.  Assume that the augmentation representation of the geometric monodromy group of $\pi$ is irreducible, equivalently that this group is two-transitive.  Then
\[
 I_{\gamma,f}\text{ is algebraic over }\C(z)
 \quad\Longleftrightarrow\quad
 [\gamma]=0\text{ in }H_1(X,\Z).
\]
Equivalently, $I_{\gamma,f}$ is algebraic if and only if $\gamma$ separates $X$.
For the exterior Cauchy transform one takes $f=\eta$.  Since $\C(X)=\C(\pi,\eta)$ on the relevant Schwarz component, the nontriviality hypothesis is automatic when $\deg\pi>1$.
\end{theorem}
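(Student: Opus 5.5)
The direction ``separating $\Rightarrow$ algebraic'' follows from Theorem~\ref{thm:A}. If $[\gamma]=0$ in $H_1(X,\Z)$, then $\gamma=\bd\Sigma$ for an integral two-chain $\Sigma$; since $\gamma$ is a simple closed curve, we may take $\Sigma$ to be one of the two sides. Formula~\eqref{eq:A-intro} then writes $I_{\gamma,f}$ as a rational function minus a finite integer combination of the branches $f(q_i(z))$. Every such branch is algebraic over $\C(z)$, so $I_{\gamma,f}$ is algebraic. This direction does not use two-transitivity.

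For the converse we analytically continue the germ of $I_{\gamma,f}$ at $z_0$ and track its monodromy. Start from the defining integral outside $\Gamma$. When $z$ crosses $\Gamma$ at a point $\pi(p)$, $p\in\gamma$, the Plemelj--Sokhotski jump shows that the continuation of the exterior germ into the interior of $\Gamma$ equals the interior germ plus $\pm f(q(z))$, where $q$ is the sheet through $p$. Next continue along a closed loop $\ell$ in $\PP^1$ minus the branch points, the images of the poles of $f$, and $\pi(\gamma)$ (taken with its crossings). Let $\sigma\in G$ be the induced permutation of sheets. The continuation acquires a correction $\sum_i c_i(\ell) f(q_i(z))$ with integer coefficients.

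These coefficients have a homological description. Deforming $\gamma$ away from the moving point, they are intersection numbers $c_i(\ell)=\langle \tilde\ell_i,\gamma\rangle$, where $\tilde\ell_i$ is the lift of $\ell$ starting on sheet $i$. Equivalently, the continuation amounts to replacing $\gamma$ by a homologous cycle in $X$ minus the fibre over $z$, and the difference is a sum of small loops around fibre points. This gives a map $\ell\mapsto c(\ell)\in\Z^d$. Because the integrand has constant coefficients, the residues at the poles of $f\,d\pi$ contribute only to a rational term, which we discard. Algebraicity of $I_{\gamma,f}$ means the orbit of the germ under continuation is finite, so the set of vectors $c(\ell)$ modulo $\ker\ev_f$ is finite.

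The key step is to combine the cocycle with irreducibility. The sum $\sum_i c_i(\ell)$ equals the intersection of the full preimage $\pi^{-1}(\ell)$ with $\gamma$, and this vanishes because $\pi^{-1}(\ell)$ is a boundary in $X$ minus the fibre. Hence $c$ takes values in the augmentation lattice $A_\Z$. Moreover $c$ is a crossed homomorphism, $c(\ell\ell')=c(\ell)+\sigma_\ell\, c(\ell')$. Since $f\notin\C(\pi)$ and $G$ is two-transitive, the augmentation module $A_\C$ is irreducible. The kernel of $\ev_f$ restricted to $A_\C$ is a $G$-submodule that cannot be all of $A_\C$, since that would force all $f(q_i)$ to be equal and hence $f\in\C(\pi)$. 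So $\ev_f$ is injective on $A$. Finiteness of the orbit then makes the image of $c$ finite.

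A crossed homomorphism with finite image restricts to a homomorphism on $\ker(\pi_1\to G)$, and a homomorphism into the torsion-free lattice $A_\Z$ with finite image is zero. Therefore $c(\ell)=0$ for every loop $\ell$ whose lift is closed on all sheets. This says $\langle\delta,\gamma\rangle=0$ for every closed curve $\delta$ in $X$ minus finitely many points: such a $\delta$ is homotopic to a lift of a loop, and one passes to the Galois cover to reduce to loops in the kernel. The subtle point is the parameter $k$, the power needed to lift $\ell$ closed; we handle it by using $\ell^k$ and the cocycle identity, with torsion-freeness again removing the ambiguity. Poincaré duality on $X$ then gives $[\gamma]=0$.

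The main obstacle is making this last step rigorous: realizing every class in $H_1(X)$ by lifts of base loops, and establishing the precise intersection formula for the jump coefficients. I would do this first by working on the Galois closure, where every loop upstairs projects to a loop with trivial monodromy, and then pushing forward to $X$.
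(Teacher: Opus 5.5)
Your proposal is correct and follows the paper's proof: Plemelj jumps give an $M_0$-valued crossed homomorphism whose coordinates are intersection numbers of closed lifts with $\gamma$; two-transitivity makes $\ev_f$ injective, so algebraicity forces the cocycle to have finite image; it then vanishes on $\ker\rho$ by torsion-freeness; and nondegeneracy of the intersection pairing gives $[\gamma]=0$. The only local difference is the step from $\ker\rho$ to loops whose monodromy merely fixes sheet $i$. There the paper averages the descended cocycle over $G$ to write it as the $\Q$-coboundary $\bar c(g)=m-gm$, while your power trick, $0=c(\ell^k)_i=k\,c_i(\ell)$ with $k$ the order of $\rho(\ell)$, reaches the same conclusion directly.
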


For a nonsingular irreducible plane curve, a generic affine projection has simple finite branch points with distinct branch values.  Its local monodromies are transpositions; transitivity then forces the full symmetric group.  We obtain the following geometric consequence.

\begin{corollary}[Generic affine dichotomy]\label{cor:generic}
Let the relevant real plane curve be nonsingular and irreducible.  For a generic real affine coordinate, the exterior Cauchy transform of an oval is algebraic if and only if the lifted oval separates the normalization.

In particular:
\begin{enumerate}[label=(\roman*)]
\item for every nonsingular real cubic with a compact oval, the exterior Cauchy transform is nonalgebraic for a generic real affine coordinate;
\item if the normalized real locus has at least two components, the transform of each individual oval is generically nonalgebraic;
\item the unique oval of a nonsingular real plane curve of even degree at least four has generically nonalgebraic exterior transform.
\end{enumerate}
\end{corollary}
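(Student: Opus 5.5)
The plan is to reduce the corollary to Theorem~\ref{thm:B}. The only hypothesis to check is two-transitivity of the geometric monodromy of a generic real affine projection. After that, items (i)--(iii) are topological statements about when an oval separates the normalization $X$.

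\textbf{Step 1: monodromy.} Let the relevant real plane curve $C$ be nonsingular and irreducible of degree $n\ge 2$. The relevant Schwarz component is then $C$ itself (or its complexification), so $X=C$ and $\pi$ is the restriction of an affine coordinate.
\begin{itemize}
\item For a generic linear projection, the classical genericity argument applies: the dual curve is irreducible, and bitangents, flexes and lines tangent at infinity are finitely many. Hence every tangent line in the pencil is simply tangent at a single point.
\item Consequently all finite branch points are simple with distinct branch values, and the projection is unramified over $\infty$. Each local monodromy is a transposition.
\item The complement of finitely many directions is dense, so the genericity condition can be met by a real coordinate. I would remark that this open dense set meets $\R\PP^1$.
\item Irreducibility of $X$ makes $G$ transitive, and a transitive group generated by transpositions is $S_d$. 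This group is two-transitive, so its augmentation representation is irreducible.
\item The nontriviality hypothesis holds because $\deg\pi=n>1$ and $\C(X)=\C(\pi,\eta)$.
\item Theorem~\ref{thm:B} then gives the dichotomy.
\end{itemize}
One subtlety: the exterior Cauchy transform is independent of the choice of coordinate up to an affine change. Under a real affine change $z\mapsto az+b$, $\ct_\Omega$ transforms algebraically. So algebraicity is really coordinate-independent; it is $\pi$, and hence the monodromy hypothesis, that depends on the coordinate. This is fine, since the statement only asserts the conclusion for generic coordinates.

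\textbf{Step 2: the topological items.} Recall that a real curve is dividing (type I) iff $X\setminus X(\R)$ is disconnected. A single oval $\gamma$ separates $X$ iff $X\setminus\gamma$ is disconnected. Since complex conjugation $\sigma$ fixes $\gamma$ and $X$ is connected, separation by one component forces $\gamma$ to be the whole real locus $X(\R)$, with the curve dividing.
\begin{itemize}
\item \textbf{Item (ii).} If $X(\R)$ has at least two components, suppose $\gamma$ separates $X$ into $A$ and $B$, so that $\sigma$ either preserves or swaps the two sides. Another real component $\gamma'$ lies in one side, say $A$. Then $\sigma$ cannot swap $A$ and $B$, since $\gamma'$ is fixed. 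So $\sigma$ preserves each side, and near $\gamma$ it would map a one-sided collar to itself. But $\sigma$ is orientation-reversing with fixed curve $\gamma$, so it locally swaps the two sides of $\gamma$. This is a contradiction, and $\gamma$ is nonseparating.
\item \textbf{Item (i).} A nonsingular cubic with a compact oval also has a pseudoline (odd degree), so its real locus has two components. Item (ii) then applies.
\item \textbf{Item (iii).} Here $X(\R)=\gamma$ is one oval with $n\ge4$ even, and $g=(n-1)(n-2)/2\ge 3$. For a dividing curve, Klein's congruence requires the number of components $\ell$ to satisfy $\ell\equiv g+1 \pmod 2$. Alternatively, Rokhlin's formula for type I curves gives $2(p-m)=\ell-n^2/4$; with $\ell=1$, $n\ge4$ even, this equation has no solution, since $n^2/4-1\ge3$ would require $p-m$ nonintegral or impossible counts. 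I would use Rokhlin's complex orientation formula in the form: a type I curve of degree $n=2k$ has $\ell\ge k$, equivalently Arnold's bound $\ell\equiv k\pmod 2$ together with $\ell\ge k$. This gives $1\ge k\ge2$, a contradiction. Hence the curve is not dividing, $\gamma$ does not separate, and Theorem~\ref{thm:B} gives nonalgebraicity.
\end{itemize}

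The main obstacle I expect is item (iii). It requires importing the correct classical restriction (Rokhlin/Arnold: a dividing curve of degree $2k$ has at least $k$ real components) and checking that it applies to the normalization, which here coincides with the curve. The genericity in Step 1 is standard but must be phrased to include reality and the behaviour at infinity.
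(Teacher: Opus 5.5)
\textbf{Assessment.} Your overall route is the paper's: $S_d$ monodromy for a generic affine projection plus Theorem~\ref{thm:B}; the ``conjugation swaps the two sides'' argument for (ii); (i) as a special case of (ii); and Rokhlin's complex orientation formula for (iii). However, one step of your monodromy argument is false as stated and needs repair.

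\textbf{The step at infinity.} A real affine change of coordinates never moves the line at infinity $L_\infty$. The level lines of $\pi=\alpha x+\beta y+c$ all pass through $[\beta:-\alpha:0]\in L_\infty$, and $L_\infty$ is always the fibre over $\pi=\infty$. So ramification over $\infty$ is dictated by how $C$ meets $L_\infty$, not by the choice of coordinate. Your claims that ``the projection is unramified over $\infty$'' and that ``every tangent line in the pencil is simply tangent at a single point'' therefore cannot be arranged.

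A concrete failure is $y^2=x^3-x$. It has a compact oval, so it is the paradigm case of (i). Its line $L_\infty$ is an inflectional tangent at the real point $[0:1:0]$. Every admissible $\pi$ is then totally ramified over $\infty$, with a $3$-cycle as local monodromy.

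The repair is the one the paper uses. The group $\pi_1(\PP^1\setminus B)$ is generated by small loops around the finite points of $B$, since the loop around $\infty$ is the inverse of their product. Hence $G$ is generated by the finite-branch transpositions alone. Transitivity then makes the transposition graph connected, so $G=S_d$ whatever happens over $\infty$.

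\textbf{Smaller corrections.}
\begin{enumerate}
\item \emph{Which centres are realized.} Real affine coordinates $x'+iy'$ realize exactly the non-real centres $[\beta:-\alpha:0]$, because $\alpha,\beta$ are $\R$-linearly independent. They do not realize points of $\R\PP^1$: a real centre gives a $\pi$ that collapses the oval onto a segment. Your conclusion survives only because the bad set of centres is finite.
\item \emph{Coordinate-independence of algebraicity.} This remark is unfounded. Maps $z\mapsto az+b$ do not change the monodromy of $\pi$ at all. A non-conformal real affine map $z\mapsto az+b\bar z+c$ replaces $\pi$ by $a\pi+b\eta+c$, which gives a genuinely different Cauchy-type integral. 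You do not use the remark, so nothing breaks.
\item \emph{Item (iii).} Klein's congruence only excludes even $k$, so it is not enough on its own. The decisive step is the paper's computation: Rokhlin's formula with $\ell=1$ and hence $\Pi_+=\Pi_-=0$ gives $0=1-k^2$. The bound $\ell\ge k$ that you quote is a consequence of Rokhlin's formula, not an equivalent form of it. It follows from $k^2-\ell=2(\Pi_--\Pi_+)\le 2\binom{\ell}{2}=\ell^2-\ell$.
\end{enumerate}
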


The final result shows that the algebraic case is substantially larger than the rational, quadrature-domain case.

\begin{theorem}[Algebraic but nonrational exterior Cauchy transforms]\label{thm:C}
For every $h\ge1$ and every real $\varepsilon$ with $0<|\varepsilon|<2$, there is a bounded star-shaped real-algebraic Jordan domain $\Omega_{h,\varepsilon}$ such that:
\begin{enumerate}[label=(\roman*)]
\item the normalization of its irreducible Schwarz correspondence has genus $2h$;
\item the lifted boundary is the complete real locus and separates the normalization;
\item $\ct_{\Omega_{h,\varepsilon}}$ is algebraic but nonrational.
\end{enumerate}
\end{theorem}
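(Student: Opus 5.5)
The plan is to build $\Omega_{h,\varepsilon}$ by an explicit construction in which separation and the genus count can be read off directly. Algebraicity will then come from Theorem~\ref{thm:A}, and nonrationality from its degree formula (or from the injectivity criterion for $\ev_\eta$). A natural candidate is a polynomial-type curve in polar form. I would take a real curve of the form
\[
 (z\bar z)^{N}=Q(z^{m}+\bar z^{m})\quad\text{or}\quad z\bar z = 1+\tfrac{\varepsilon}{2}\,(z^{n}+\bar z^{n})\cdot(\text{suitable factor}),
\]
with the exponent $n$ tied to $h$ (for instance $n=2h+1$ or $n=h+1$). The aim is a Schwarz correspondence $P(z,w)=0$ that is invariant under $(z,w)\mapsto(\bar w,\bar z)$, has a real locus consisting of a single star-shaped oval for $0<|\varepsilon|<2$, and is an $M$-curve-type (dividing) curve. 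The condition $|\varepsilon|<2$ is exactly what keeps $1+\tfrac{\varepsilon}{2}(\cdots)$ positive on the circle. That positivity makes the radial function $r(\theta)$ positive and smooth, so the domain is star-shaped and bounded, and the Jordan curve is smooth.

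The key steps, in order, are as follows.
\begin{enumerate}[label=(\arabic*)]
\item Write the polar equation and verify star-shapedness, smoothness, and the fact that the real locus is exactly this oval.
\item Show irreducibility of $P(z,w)$ and compute the genus of the normalization $X$ by Riemann--Hurwitz applied to $\pi$, counting branch points. The symmetry $z\mapsto e^{2\pi i/n}z$ should make these explicit, and the count should give $2h$.
\item Prove that the real locus $\gamma$ separates $X$. I would first try the dividing-curve criterion: exhibit a real meromorphic function on $X$ (for example $\eta/\pi$ or $\pi\eta$, suitably normalized) all of whose fibres over a real interval are totally real. Equivalently, exhibit the two halves $\Sigma_\pm=\{|\pi|^2\lessgtr \pi\eta\}$-type regions, for instance $\Sigma=\{q: |\pi(q)|<|\bar\eta(q)|\}$ suitably interpreted, and check that $\gamma$ is their common boundary. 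Harnack counting (one real component, genus even) is consistent with this, but the separation itself must be proved directly.
\item Apply Theorem~\ref{thm:A} with $f=\eta$ and $\gamma=\bd\Sigma$. This expresses $\ct_\Omega$ as a rational function minus an incomplete trace $\sum m_i\eta(q_i(z))$, hence algebraic.
\end{enumerate}

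For nonrationality, Theorem~\ref{thm:A} gives the degree $|G\cdot m|$ whenever $\ev_\eta$ is injective on the augmentation module. I would argue more cheaply, as follows. If $\ct_\Omega$ were rational, then $\Omega$ would be a quadrature domain, and then its Schwarz function would extend meromorphically to $\Omega$. In that case the relevant Schwarz correspondence is the Schottky double of $\Omega$, which has genus $0$ for a simply connected domain. This contradicts genus $2h\ge 2$. Alternatively, one can show directly that the incomplete trace is not in $\C(z)$: some fibre over a generic $z$ has $0<k<d$ points in $\Sigma$, and the monodromy of $\pi$ moves a point of $\Sigma$ to a point outside $\Sigma$. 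Then $\eta$ separates these points, because $\C(X)=\C(\pi,\eta)$.

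I expect the main obstacle to be step (3), proving that $\gamma$ separates $X$ while the genus is positive. The first approach I would try is to locate the real curve as one with a real morphism to $\PP^1$ having totally real fibres, using the explicit $z^n$-symmetry to reduce to a quotient curve of genus $0$. I would then lift the separation through that cover, checking that the preimage of the separating real circle of the quotient is connected, which is exactly $\gamma$.
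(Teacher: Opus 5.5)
There is a genuine gap: the proposal never actually produces the domain. Steps (1)--(3) are stated as goals (``the count should give $2h$'', ``I would first try''), yet the existence of a dividing curve of genus $2h$ whose real locus is a single star-shaped oval is the whole content of the theorem. The parts you do carry out are the routine ones, and they agree with the paper: algebraicity via the chain residue formula, and nonrationality because a simply connected quadrature domain has a rational Schwarz normalization.

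Moreover, the ansatz you sketch fails. Take $zw=1+\frac{\varepsilon}{2}(z^n+w^n)$. Its top-degree form is proportional to $\operatorname{Re}(x+iy)^n$, which has $n$ real zeros. So the real curve has real points at infinity, and hence real components besides the compact oval. By the argument of Corollary~\ref{cor:generic}(ii), the oval then cannot separate. For generic $\varepsilon$ this curve is also nonsingular of genus $(n-1)(n-2)/2$, which for $n=2h+1$ equals $h(2h-1)\ne 2h$. In addition, its polar equation $\rho^2=1+\varepsilon\rho^n\cos n\theta$ is not of the form $\rho=r(\theta)$.

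The obstruction is structural. On a nonsingular plane Schwarz curve of degree at least $3$, an oval never separates. A second real component is excluded by the argument of (ii); odd degree forces such a component (a pseudoline); and a single oval of even degree $\ge4$ is nondividing by Rokhlin's formula. So a positive-genus example must come from a singular plane model. A polar-polynomial ansatz with Riemann--Hurwitz for $\pi$ gives you no control over the singularities, the genus, or extra real branches. There is also a point to correct in your lifting idea. What transfers separation through a real cover is that the preimage of the quotient's real locus contain only real points, i.e.\ that the fibres over it be totally real. Connectedness of that preimage is a separate matter.

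The missing idea is to reverse the order of construction. Begin with an abstract curve already known to be dividing, with one real component and genus $2h$. Only afterwards build a plane embedding in which star-shapedness is automatic.

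The paper takes $X_h: y^2=x^{4h+2}+1$. It is dividing because $x^{2n}+1>0$ on $\R$, so $x$ is a totally real double cover of $\R\PP^1$. Its real locus $X_h(\R)$ is a single circle because $t=y+x^n$ maps it bijectively onto $\R\PP^1$ when $n=2h+1$ is odd.

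The paper then sets $z=s\phi$ and $w=s^2/z$, with $\phi=(t+i)/(t-i)$ and $s=1+\varepsilon x/(1+x^2)$. On the real locus $|\phi|=1$ and $s>0$; this is where $|\varepsilon|<2$ enters. Hence $w=\bar z$ there and the image is star-shaped. Two things remain, and both use $\varepsilon\neq0$. One shows that $(z,w)$ generates $\C(X_h)$, so that the Schwarz normalization is $X_h$. One also shows that $z$ is immersive at the four real ramification points of $t$.
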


The examples are obtained from the hyperelliptic curves $y^2=x^{4h+2}+1$ by an explicit birational real projection.  Their parity is optimal: a dividing real curve with one real component necessarily has even genus.

The paper is organized as follows.  Section~\ref{sec:boundary} records the potential-theoretic normalization, the boundary formula, and the residue expression.  Section~\ref{sec:cocycle} identifies the branch-sum cocycle with intersection numbers of lifted loops.  Section~\ref{sec:proofs} proves Theorems~\ref{thm:A} and~\ref{thm:B}.  Section~\ref{sec:real} gives consequences for real algebraic ovals, and Section~\ref{sec:examples} proves Theorem~\ref{thm:C}.

\section{Logarithmic potentials and boundary reduction}\label{sec:boundary}

We first record the elementary potential-theoretic identities behind the boundary integral.

\begin{proposition}[Potential and moment representation]\label{prop:potential}
For a bounded measurable set $\Omega\subset\C$, the distributions in \eqref{eq:potential-cauchy} satisfy
\[
 \Delta\LogPot_\Omega=2\mathbf 1_\Omega,
 \qquad
 \bar\partial\ct_\Omega=\mathbf 1_\Omega.
\]
On $\C\setminus\overline\Omega$ the potential is harmonic and
\[
 \ct_\Omega=2\partial_z\LogPot_\Omega.
\]
Moreover, for $|z|$ sufficiently large,
\[
 \ct_\Omega(z)=\sum_{j=0}^{\infty}\frac{M_j(\Omega)}{z^{j+1}},
 \qquad
 M_j(\Omega)=\frac1\pi\int_\Omega \zeta^j\,dA(\zeta).
\]
Thus the exterior Cauchy transform is the generating function of the analytic moments of area measure.
\end{proposition}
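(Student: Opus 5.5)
The plan is to derive all four assertions directly from the fundamental solutions $\frac1{2\pi}\log|z|$ of the Laplacian and $\frac1{\pi z}$ of $\bar\partial$, and then to expand the Cauchy kernel geometrically for $|z|$ large. Since $\Omega$ is bounded, $\mu_\Omega$ is a finite compactly supported measure. Hence both integrals in \eqref{eq:potential-cauchy} converge absolutely: $\log|z-\zeta|$ and $1/(z-\zeta)$ are locally integrable in $\zeta$, uniformly in $z$ on compacts. They therefore define $L^1_{\mathrm{loc}}$ functions, i.e.\ distributions, and we may differentiate under the pairing.

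\emph{Step 1 (distributional identities).} Write $\LogPot_\Omega=\log|\cdot|*\mu_\Omega$ and $\ct_\Omega=\frac1z*\mu_\Omega$; convolution with a compactly supported measure commutes with differentiation. Using $\Delta\log|z|=2\pi\delta_0$ and $\bar\partial(1/z)=\pi\delta_0$, we get $\Delta\LogPot_\Omega=2\pi\mu_\Omega=2\mathbf 1_\Omega$ and $\bar\partial\ct_\Omega=\pi\mu_\Omega=\mathbf 1_\Omega$. The two kernel identities are the standard Green's formula computations, obtained by excising a small disc around $0$. Next, $\partial_z\log|z|=\frac1{2z}$ holds pointwise off $0$, and also distributionally, since $\log|z|$ has no delta part in its first derivatives. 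Convolving gives $2\partial_z\LogPot_\Omega=\ct_\Omega$ as distributions on all of $\C$.

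\emph{Step 2 (exterior regularity).} For $z$ in the open set $\C\setminus\overline\Omega$, the integrands are smooth in $z$ and dominated uniformly on compact subsets. So $\LogPot_\Omega$ is $C^\infty$ there, and differentiating under the integral shows it is harmonic. Likewise $\ct_\Omega$ is holomorphic there, and it equals $2\partial_z\LogPot_\Omega$ pointwise by Step 1.

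\emph{Step 3 (moment expansion).} Choose $R$ with $\Omega\subset\{|\zeta|\le R\}$. For $|z|>R$, the geometric series $\frac1{z-\zeta}=\sum_{j\ge0}\zeta^j z^{-j-1}$ converges uniformly in $\zeta\in\Omega$, with tail bounded by $\sum_j R^j|z|^{-j-1}$. We may therefore interchange sum and integral, which yields $\ct_\Omega(z)=\sum_j M_j(\Omega)z^{-j-1}$ with $|M_j|\le \frac{|\Omega|}{\pi}R^j$.

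None of these steps is a real obstacle. The only point requiring care is justifying the distributional kernel identities and the absence of a singular term in $\partial_z\log|z|$, and this is classical.
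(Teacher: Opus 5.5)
Your proposal is correct and takes the same route as the paper. The distributional identities come from the fundamental solutions $\Delta\log|z|=2\pi\delta_0$ and $\bar\partial(1/z)=\pi\delta_0$ convolved with $\mu_\Omega$. The exterior statements come from differentiating under the integral, and the moment expansion from the geometric series for $(z-\zeta)^{-1}$. You just supply more justification than the paper's three-line proof, including the global distributional identity $2\partial_z\LogPot_\Omega=\ct_\Omega$ and uniform convergence for $|z|>R$.
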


\begin{proof}
The distributional identities follow from $\Delta\log|z|=2\pi\delta_0$ and $\bar\partial(1/(\pi z))=\delta_0$.  Differentiation under the integral gives the identity outside $\overline\Omega$, and the moment expansion follows by expanding $(z-\zeta)^{-1}$ as a geometric series.
\end{proof}

\begin{proposition}[Cauchy--Green]\label{prop:CG}
Orient $\bd\Omega$ positively. For $z\notin\overline\Omega$,
\[
 \ct_\Omega(z)=\frac1{2\pi i}\int_{\bd\Omega}
 \frac{\bar\zeta\,d\zeta}{z-\zeta}.
\]
More generally, if $\rho(z,\bar z)$ is polynomial and $\partial R_\rho/\partial w=\rho$, then
\[
 \frac1\pi\int_\Omega\frac{\rho(\zeta,\bar\zeta)dA(\zeta)}{z-\zeta}
 =\frac1{2\pi i}\int_{\bd\Omega}
 \frac{R_\rho(\zeta,\bar\zeta)d\zeta}{z-\zeta}.
\]
\end{proposition}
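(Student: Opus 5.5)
The statement to prove is Proposition~\ref{prop:CG}. The plan is to derive it from the complex form of Green's theorem (Stokes' theorem for $1$-forms),
\[
 \int_{\bd\Omega} g\,d\zeta=\int_\Omega d(g\,d\zeta)=\int_\Omega \bar\partial g\,d\bar\zeta\wedge d\zeta=2i\int_\Omega\bar\partial g\,dA,
\]
valid for $g$ of class $C^1$ on a neighbourhood of $\overline\Omega$ when $\bd\Omega$ is positively oriented and piecewise smooth. We implicitly assume this boundary regularity, which is the only setting in which the boundary integral makes sense; in the paper's application $\bd\Omega$ is a smooth oval.

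Fix $z\notin\overline\Omega$ and apply the identity to
\[
 g(\zeta)=\frac{R_\rho(\zeta,\bar\zeta)}{z-\zeta}.
\]
This function is smooth on a neighbourhood of $\overline\Omega$, since the only singularity $\zeta=z$ lies outside. The factor $1/(z-\zeta)$ is holomorphic in $\zeta$ there. Since $R_\rho(\zeta,w)$ is a polynomial with $\partial R_\rho/\partial w=\rho$, the chain rule gives
\[
 \bar\partial_\zeta R_\rho(\zeta,\bar\zeta)=(\partial_w R_\rho)(\zeta,\bar\zeta)=\rho(\zeta,\bar\zeta).
\]
Hence $\bar\partial g=\rho(\zeta,\bar\zeta)/(z-\zeta)$, and Green's formula yields
\[
 \frac1{2\pi i}\int_{\bd\Omega}\frac{R_\rho\,d\zeta}{z-\zeta}=\frac{2i}{2\pi i}\int_\Omega\frac{\rho\,dA}{z-\zeta}=\frac1\pi\int_\Omega\frac{\rho\,dA}{z-\zeta}.
\]
This is the general formula.

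The first formula is the special case $\rho\equiv1$, $R_\rho=w$. Its left side is $\int_\Omega(z-\zeta)^{-1}\,d\mu_\Omega=\ct_\Omega(z)$, directly from \eqref{eq:potential-cauchy}.

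There is no genuine obstacle. The only points needing care are the sign and normalisation of $d\bar\zeta\wedge d\zeta=2i\,dx\wedge dy$ and the positive orientation convention. The nonzero but otherwise unspecified regularity of $\bd\Omega$ can be handled by exhausting $\Omega$ from inside with smooth domains if needed.
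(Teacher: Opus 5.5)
Your proposal is correct and is essentially the paper's proof: the paper simply applies the Cauchy--Green formula to $\bar\zeta/(z-\zeta)$ and to $R_\rho(\zeta,\bar\zeta)/(z-\zeta)$. You write out that same computation explicitly, with the correct normalisation $d\bar\zeta\wedge d\zeta=2i\,dx\wedge dy$ and the identity $\bar\partial_\zeta R_\rho(\zeta,\bar\zeta)=\rho(\zeta,\bar\zeta)$.
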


\begin{proof}
Apply Cauchy--Green to $\bar\zeta/(z-\zeta)$, respectively to $R_\rho(\zeta,\bar\zeta)/(z-\zeta)$.
\end{proof}

On the normalization $X$, the unweighted form is $f\,d\pi=\eta\,d\pi$; for the polynomial density it is $R_\rho(\pi,\eta)d\pi$. We now record the chain residue formula in the general form needed below.

\begin{proposition}[Chain residue formula]\label{prop:chain-res}
Let $f\in\C(X)$ and let $\gamma=\bd\Sigma$ for an integral two-chain on $X$, with $\gamma$ disjoint from the poles of $f\,d\pi$.  Let $n_\Sigma$ be the locally constant integer multiplicity of $\Sigma$ on $X\setminus|\gamma|$. For $z\notin\pi(\gamma)$,
\begin{equation}\label{eq:chain-res}
 \frac1{2\pi i}\int_\gamma\frac{f\,d\pi}{z-\pi}
 =\sum_{a\in X}n_\Sigma(a)\Res_a\frac{f\,d\pi}{z-\pi}.
\end{equation}
If $q\in\pi^{-1}(z)$ is a point at which $f$ is regular and $\pi$ has ramification index $e_q$, then the moving-pole residue equals $-e_qf(q)$.  In particular it is $-f(q)$ at a regular fibre point.  The sum of the residues at the fixed poles of $f\,d\pi$ is a rational function $R_\Sigma(z)$.
\end{proposition}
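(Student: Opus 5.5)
The plan is to apply Stokes' theorem on $X$ to the meromorphic one-form $\omega_z=\frac{f\,d\pi}{z-\pi}$, with $z\notin\pi(\gamma)$ fixed, after removing small discs around its poles.

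\emph{Locating the poles.} The poles of $\omega_z$ are of two kinds. The \emph{fixed} poles are the poles of $f\,d\pi$; by hypothesis they avoid $|\gamma|$. The \emph{moving} poles are the points of the fibre $\pi^{-1}(z)$. Since $z\notin\pi(\gamma)$, none of these lies on $|\gamma|$ either. Poles of $\pi$ itself need a separate check: there $f\,d\pi/(z-\pi)$ may be regular or singular, but in any case the point is a pole of $f\,d\pi$ or a regular point, so it is already counted among the fixed ones or can be ignored. Hence $\omega_z$ has finitely many poles, all off $|\gamma|$.

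\emph{Stokes' theorem.} Subdivide so that $\Sigma$ is a finite integral combination of smooth $2$-simplices. Excise small discs $D_a$ around each pole $a$, chosen disjoint from $|\gamma|$ and from each other, so that $n_\Sigma$ is constant (equal to $n_\Sigma(a)$) on each $D_a$. Since $\omega_z$ is holomorphic, hence closed, off the poles, Stokes gives
\[
\int_\gamma\omega_z=\sum_a n_\Sigma(a)\int_{\bd D_a}\omega_z,
\]
and each boundary integral equals $2\pi i\Res_a\omega_z$. A cleaner way to see the multiplicities is to write $\Sigma$ as a sum $\sum n_j\overline{U_j}$ over the closures of the complementary regions of $|\gamma|$, with $\bd\Sigma=\gamma$ as chains, and apply the residue theorem to each region. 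Dividing by $2\pi i$ yields \eqref{eq:chain-res}.

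\emph{Moving residues.} Let $q\in\pi^{-1}(z)$ with $f$ regular at $q$ and ramification index $e=e_q$. Take a local coordinate $t$ at $q$ with $\pi=z+t^e$. Then $d\pi=e\,t^{e-1}dt$, so
\[
\omega_z=-\frac{f\,e\,t^{e-1}\,dt}{t^e}=-e\,\frac{f\,dt}{t},
\]
whose residue is $-e_qf(q)$.

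\emph{Fixed residues.} Let $a$ be a fixed pole. If $\pi(a)=c$ is finite and $a$ is not in the moving fibre, expand $(z-\pi)^{-1}$ as a power series in $\pi-c$ with coefficients $(z-c)^{-k-1}$. Only finitely many terms meet the pole order of $f\,d\pi$, so the residue is a polynomial in $(z-c)^{-1}$. If $\pi(a)=\infty$, expand $(z-\pi)^{-1}=-\pi^{-1}(1-z/\pi)^{-1}$ as a series in $z/\pi$; since $1/\pi$ vanishes at $a$, only finitely many terms contribute, giving a polynomial in $z$. The sum over fixed poles is therefore a rational function $R_\Sigma(z)$, which depends on $\Sigma$ only through the integers $n_\Sigma(a)$.

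\emph{Main obstacle.} The only delicate step is $z$ colliding with some $\pi(a)$ for a fixed pole $a$, where the two kinds of pole merge. The plan is to restrict to generic $z$, away from this finite set, so that $R_\Sigma$ is well defined. The identity \eqref{eq:chain-res} holds for all admissible $z$ as stated, since the Stokes argument never used genericity; the rational expression then extends by analytic continuation.
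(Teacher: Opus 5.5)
Your proposal is correct and follows essentially the same route as the paper: the residue theorem applied to the bounding chain (you excise discs and use Stokes or the complementary regions of $|\gamma|$, while the paper triangulates $\Sigma$ so that no pole lies on an edge), the local coordinate with $\pi-z=u^{e_q}$ for the moving residue, and local expansion at the fixed poles. Your explicit expansions of $(z-\pi)^{-1}$ in powers of $\pi-c$ and of $z/\pi$, giving polynomials in $(z-c)^{-1}$ and in $z$, spell out the paper's one-line claim that the fixed residues are rational in $z$.
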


\begin{proof}
Triangulate $\Sigma$ so that no pole lies on an edge.  Applying the ordinary residue theorem to each two-simplex and summing cancels all interior edges, leaving the boundary $\gamma$; this gives \eqref{eq:chain-res}.  If $u$ is a local coordinate at $q$ with $\pi-z=u^{e_q}$, then
\[
 \frac{f\,d\pi}{z-\pi}=-e_q f(u)\frac{du}{u},
\]
which gives the stated moving-pole residue. At a fixed point $a$, expansion in a local coordinate shows that the residue is rational in the parameter $z$; only finitely many fixed points occur.
\end{proof}

\begin{remark}[Choice of the bounding chain]\label{rem:chain-choice}
Two integral two-chains with boundary $\gamma$ differ, up to a boundary, by an integral multiple of the fundamental class $[X]$.  Replacing $\Sigma$ by $\Sigma+N[X]$ adds $N(1,\ldots,1)$ to the multiplicity vector $m$ and adds $N\operatorname{Tr}_\pi(f)$ to $R_\Sigma$, where $\operatorname{Tr}_\pi(f)=\sum_{i=1}^d f(q_i)$ on a regular fibre; hence the difference in \eqref{eq:A-intro} and the orbit size $|G\cdot m|$ are unchanged.  Taking $\Sigma=X$ gives $R_X=\operatorname{Tr}_\pi(f)$, which explains the term ``incomplete trace.''
\end{remark}

\begin{remark}
The formula is a statement about the boundary-integral branch on each component of $\C\setminus\pi(\gamma)$. On the exterior component it equals the area Cauchy transform. On an interior component it is generally not the literal area transform.
\end{remark}

\section{The branch-sum cocycle}\label{sec:cocycle}

Let $B\subset\PP^1$ contain the branch values of $\pi$, the images of the poles of $f$, and infinity, and choose $b\in\C\setminus(B\cup\Gamma)$. Write
\[
 \pi^{-1}(b)=\{p_1,\ldots,p_d\},
 \qquad f_i=f(p_i)
\]
for the corresponding germs at $b$. Let
\[
 \rho:\pi_1(\PP^1\setminus B,b)\longrightarrow G\le S_d
\]
be the monodromy representation and let
\[
 M=\Z^d,\qquad M_0=\{(a_i)\in\Z^d:\sum_i a_i=0\}.
\]

For a loop $\ell$ transverse to $\Gamma$, the Plemelj crossing formula says that analytic continuation of
\[
 I_\gamma(z)=\frac1{2\pi i}\int_\gamma\frac{f\,d\pi}{z-\pi}
\]
changes it by a signed sum of branches of $f$. Recording the integer coefficient of each branch at $b$ gives a map
\[
 c_\gamma:\pi_1(\PP^1\setminus B,b)\longrightarrow M_0.
\]
The sum of the coordinates is zero because a closed loop has total signed intersection zero with the Jordan curve $\Gamma$.

\begin{lemma}[Intersection cocycle]\label{lem:cocycle}
Use the convention that $\ell_1\ell_2$ means that $\ell_2$ is traversed first and $\ell_1$ second, and let $\rho(\ell)$ transport endpoint sheet labels back to the base fibre.  With the left action
\[
 (ga)_j=a_{g^{-1}j},\qquad g\in G,
\]
the map $c_\gamma$ is a crossed homomorphism,
\begin{equation}\label{eq:cocycle}
 c_\gamma(\ell_1\ell_2)=c_\gamma(\ell_1)+\rho(\ell_1)c_\gamma(\ell_2).
\end{equation}
Moreover:
\begin{enumerate}[label=(\roman*)]
\item after a homotopy making $\ell$ transverse to $\Gamma$, let $\widetilde\ell_i^{\rm end}$ be the lift of $\ell$ ending at $p_i$; whenever this lift is closed,
\[
 c_{\gamma,i}(\ell)=\widetilde\ell_i^{\rm end}\cdot\gamma,
\]
up to the single global sign fixed by the Plemelj convention;
\item the combinatorial monodromy of $I_\gamma$ is
\[
 A_\gamma(\ell)=\ev_f(c_\gamma(\ell))
 =\sum_{i=1}^d c_{\gamma,i}(\ell)f_i.
\]
\end{enumerate}
\end{lemma}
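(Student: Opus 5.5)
The plan is to work entirely with germs at $b$ and to define $c_\gamma$ by local analysis at each crossing. Make a loop $\ell$ transverse to $\Gamma$, crossing it at finitely many points $t_1,\dots,t_r$ in order of traversal. Near a crossing point $z_*\in\Gamma$, $\pi$ restricted to $\gamma$ is a diffeomorphism onto $\Gamma$, so there is exactly one point $g_*\in\gamma$ over $z_*$. The function $\zeta\mapsto f(\pi|_\gamma^{-1}(\zeta))\,d\zeta/d\zeta$ is real-analytic on $\Gamma$ near $z_*$ and extends holomorphically, namely as the branch of $f$ through $g_*$. The Plemelj--Sokhotski formula then says that the two one-sided boundary values of $I_\gamma$ differ by $\pm f(\cdot)$ along that branch. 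Continuing $I_\gamma$ across $\Gamma$ therefore gives the old function plus or minus the branch of $f$ through the sheet containing $\gamma$. The sign is $+$ or $-$ according to the crossing direction, up to one global convention. Transporting that branch along the rest of $\ell$ to the endpoint $b$ gives $\pm f_j$ for a well-defined sheet index $j$. Summing over crossings defines $c_\gamma(\ell)\in\Z^d$, and the signed crossing count of a closed loop with a Jordan curve is zero, so $c_\gamma(\ell)\in M_0$.

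Homotopy invariance follows because the continuation of a germ depends only on the homotopy class. Alternatively, the definition is invariant under the elementary moves of a transverse homotopy: creating or cancelling a pair of opposite crossings contributes the same sheet with opposite signs, and moving the loop past points of $B$ is excluded.

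For the cocycle identity \eqref{eq:cocycle}, traverse $\ell_2$ first, which produces $I_\gamma+\sum_j c_{\gamma,j}(\ell_2)f_j$. Then traverse $\ell_1$. The continuation of $I_\gamma$ along $\ell_1$ contributes $c_\gamma(\ell_1)$. Each branch germ $f_j$ is permuted along $\ell_1$ to the germ $f_{\rho(\ell_1)j}$, with the paper's convention fixing whether $\rho$ or its inverse appears. Reindexing the coefficient vector gives exactly $(\rho(\ell_1)a)_j=a_{\rho(\ell_1)^{-1}j}$. The only real work here is tracking the labeling convention for $\rho$, and I would check it on a single transposition loop.

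Part (ii) is immediate from the definition, since $A_\gamma(\ell)$ is the difference of the continued germ and the original germ, and this difference is $\sum_j c_{\gamma,j}(\ell)f_j=\ev_f(c_\gamma(\ell))$.

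For part (i), I would read off the coefficient of $f_i$: it is the signed sum over those crossings $t_k$ whose emanating branch, transported to the end of $\ell$, lands on $p_i$. Equivalently, lift $\ell$ backwards from $p_i$ to obtain $\widetilde\ell_i^{\rm end}$. The crossing at $t_k$ contributes to the $i$-th coordinate exactly when this lift passes through $g_k\in\gamma$, that is, when the lift meets $\gamma$ there. The local sign of that intersection on $X$ agrees with the crossing sign of $\ell$ with $\Gamma$ downstairs, because $\pi$ is an orientation-preserving local diffeomorphism near $\gamma$. So $c_{\gamma,i}(\ell)$ is the signed count of intersections of the lift with $\gamma$. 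When the lift is closed, this is the homological intersection number $\widetilde\ell_i^{\rm end}\cdot\gamma$.

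I expect the main obstacle to be bookkeeping rather than substance. The delicate points are making the transport of the emanating branch to $p_i$ consistent with backward lifting, and matching the Plemelj sign with the orientation of the intersection pairing. I would fix both by checking a small loop around a single crossing point.
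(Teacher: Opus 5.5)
Your proposal is correct and is essentially the paper's own argument. Like the paper, you split $\ell$ at its crossings with $\Gamma$, use the Plemelj jump to pick up the branch of $f$ through the unique point of $\gamma$ over each crossing, transport that branch to the end of $\ell$ to obtain (i) and (ii), and get \eqref{eq:cocycle} by pushing the labels of the $\ell_2$-crossings through $\ell_1$. Your treatment of well-definedness goes beyond the paper, with two small caveats: continuation of germs only makes $A_\gamma$, not the integer vector $c_\gamma$, homotopy invariant (the $f_i$ may be linearly dependent), so it is your elementary-moves argument that does the work; and a transposition loop cannot distinguish $\rho$ from $\rho^{-1}$, whereas the convention is in fact forced by $\rho(\ell_1\ell_2)=\rho(\ell_1)\rho(\ell_2)$, which makes $\rho(\ell)$ send $j$ to the endpoint index of the lift starting at $p_j$---exactly the convention your reindexing uses.
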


\begin{proof}
Split a transverse loop $\ell$ at its crossings with $\Gamma$.  At a crossing point $a$, the branch carried by $\Gamma$ is the value of $f$ at the unique point of $\gamma$ over $a$, since $\pi|_\gamma$ is one-to-one. Continue this branch along the remaining part of $\ell$ to its endpoint at the base point.  The coefficient of $f_i$ is therefore the signed number of intersections of the lift ending at $p_i$ with $\gamma$.  This proves~(i), and applying $\ev_f$ gives exactly the sum of branches in~\cite[Equation~(4.1)]{PRY}, proving~(ii).

For $\ell_1\ell_2$, the crossings made during $\ell_1$, which is traversed last, contribute $c_\gamma(\ell_1)$.  The endpoint labels of the crossings made during $\ell_2$ are then transported through $\ell_1$, contributing $\rho(\ell_1)c_\gamma(\ell_2)$.  This proves \eqref{eq:cocycle}. Finally, $\sum_i c_{\gamma,i}(\ell)=0$, since it is the signed intersection number of the closed planar loop $\ell$ with the Jordan curve $\Gamma$.
\end{proof}

\begin{remark}[Relative rather than absolute monodromy]\label{rem:relative}
The cocycle $c_\gamma$ is a sheet-valued, relative crossing invariant.  The argument does not assert a Picard--Lefschetz increment in absolute homology: point-pushing of punctures may act trivially on $H_1(X,\Z)$ while the relative branch-sum cocycle is unbounded.  Lemma~\ref{lem:finite-null} instead recovers the absolute class $[\gamma]$ from the intersection numbers of closed lifts supplied by Lemma~\ref{lem:cocycle}(i).
\end{remark}

The integral above is a Cauchy-type integral of the algebraic branch of $f$ carried by the closed curve $\Gamma$, in the precise sense of~\cite{PRY}.  Since the branch is regular on $\Gamma$ and the curve has no endpoints or self-intersections, the algebraicity criterion of Pakovich--Roytvarf--Yomdin applies directly: $I_\gamma$ is algebraic if and only if the image of $A_\gamma$ is finite; see~\cite[Theorem~4.5]{PRY}. The next observation converts finiteness of the evaluated cocycle into topology.

\begin{lemma}[Finite cocycle implies null homology]\label{lem:finite-null}
Assume that $\ev_f:M_0\otimes\C\to L$ is injective. If the image of $A_\gamma$ is finite, then $[\gamma]=0$ in $H_1(X,\Z)$.
\end{lemma}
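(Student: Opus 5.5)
The plan is to combine the injectivity of $\ev_f$ with the crossed-homomorphism property of $c_\gamma$ from Lemma~\ref{lem:cocycle}. This will show that finiteness of $A_\gamma$ forces every closed lift of a planar loop to have intersection number zero with $\gamma$. We then conclude by Poincar\'e duality, using that such lifts generate $H_1(X,\Z)$.

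\textbf{Step 1: $c_\gamma$ has finite image.} By Lemma~\ref{lem:cocycle}(ii), $A_\gamma=\ev_f\circ c_\gamma$. Since $c_\gamma$ takes values in $M_0\subset M_0\otimes\C$ and $\ev_f$ is injective there, finiteness of the image of $A_\gamma$ gives finiteness of the image of $c_\gamma$ in $M_0$.

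\textbf{Step 2: $c_\gamma$ vanishes on loops with trivial monodromy.} Let $K=\ker\rho$. By \eqref{eq:cocycle}, the restriction of $c_\gamma$ to $K$ is an honest homomorphism $K\to M_0\cong\Z^{d-1}$. Its image is a finite subgroup of a torsion-free group, hence trivial. Now take any loop $\ell$ for which the lift ending at $p_i$ is closed. Some power $\ell^r$ with $r=|G|$ lies in $K$. Iterating \eqref{eq:cocycle} and reading off coordinate $i$, which is fixed by $\rho(\ell)$ because the lift ending at $p_i$ is closed, we get
\[
 0=c_{\gamma,i}(\ell^r)=\sum_{k=0}^{r-1}\bigl(\rho(\ell)^k c_\gamma(\ell)\bigr)_i=r\,c_{\gamma,i}(\ell).
\]
Hence $c_{\gamma,i}(\ell)=0$. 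By Lemma~\ref{lem:cocycle}(i) this says $\widetilde\ell_i^{\rm end}\cdot\gamma=0$ for every closed lift of a planar loop.

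\textbf{Step 3: closed lifts generate $H_1(X,\Z)$.} Let $X^\circ=X\setminus\pi^{-1}(B)$, which is connected, and fix a base point $p_i$. Every loop in $X^\circ$ based at $p_i$ projects to a loop $\ell$ in $\PP^1\setminus B$ whose lift ending at $p_i$ is that same loop. After a homotopy in the base, which lifts to $X^\circ$, we may take $\ell$ transverse to $\Gamma$. So the closed lifts give all of $\pi_1(X^\circ,p_i)$. The inclusion $X^\circ\hookrightarrow X$ induces a surjection on $H_1$, since filling in punctures only kills classes. Therefore $[\gamma]$ has zero intersection pairing with every class in $H_1(X,\Z)$.

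\textbf{Step 4: conclude.} The intersection pairing on the closed oriented surface $X$ is unimodular. A class pairing trivially with all of $H_1(X,\Z)$ therefore vanishes in $H_1(X,\Z)/\mathrm{torsion}$, and $H_1(X,\Z)$ is torsion-free. Hence $[\gamma]=0$.

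\textbf{Main obstacle.} The delicate point is Step 2. One must check that the sign and labelling conventions of \eqref{eq:cocycle} actually give the displayed identity, namely that coordinate $i$ is fixed along the iteration exactly when the lift at $p_i$ closes. Equivalently, one can apply Lemma~\ref{lem:cocycle}(i) directly to $\ell^r$: its lift at $p_i$ is the $r$-fold traversal of the closed lift of $\ell$, so the intersection number simply multiplies by $r$. I would use this geometric version to avoid the convention bookkeeping.
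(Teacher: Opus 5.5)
Your proof is correct. Its skeleton matches the paper's: injectivity of $\ev_f$ makes $c_\gamma$ finite, torsion-freeness of $M_0$ kills $c_\gamma$ on $K=\ker\rho$, one shows $c_{\gamma,i}(\ell)=0$ whenever $\rho(\ell)$ fixes $i$, and nondegeneracy of the intersection pairing finishes.

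The difference is in that middle step. The paper descends $c_\gamma$ to a crossed homomorphism $\bar c_\gamma:G\to M_0$ and averages over all of $G$. This gives $\bar c_\gamma(g)=m-gm$ with $m=|G|^{-1}\sum_{g}\bar c_\gamma(g)$, i.e.\ the vanishing of $H^1(G,M_0\otimes\Q)$ for finite $G$. It then reads off the $i$-th coordinate for $g$ in the stabilizer of $i$.

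You instead use only the cyclic group generated by $\rho(\ell)$. Since $\ell^{|G|}\in K$, you get $|G|\,c_{\gamma,i}(\ell)=0$ in either of two ways: by iterating \eqref{eq:cocycle}, or by observing that the lift of $\ell^{|G|}$ at $p_i$ is the $|G|$-fold traversal of the closed lift of $\ell$.

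Your version is more elementary, and its geometric form avoids the action and sign conventions altogether. The paper's version proves more than the lemma needs: $c_\gamma$ is globally a rational coboundary. Up to sign, this is exactly the shape $c_\gamma(\ell)=m-\rho(\ell)m$ that the incomplete-trace formula of Theorem~\ref{thm:A} produces in the separating case, with $m$ the side multiplicity vector.

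Your Step~3 also makes explicit what the paper compresses into one sentence. You use homotopy lifting in the covering $X\setminus\pi^{-1}(B)\to\PP^1\setminus B$ and surjectivity of $H_1(X\setminus\pi^{-1}(B))\to H_1(X)$.
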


\begin{proof}
Because $\ev_f$ is injective, finiteness of the image of $A_\gamma=\ev_f\circ c_\gamma$ implies finiteness of the image of $c_\gamma$.  Put $K=\ker\rho$.  On $K$, equation~\eqref{eq:cocycle} reduces to
\[
 c_\gamma(k_1k_2)=c_\gamma(k_1)+c_\gamma(k_2),
\]
so $c_\gamma|_K$ is a homomorphism into the torsion-free abelian group $M_0$.  Its image is finite and therefore zero.  Hence $c_\gamma$ depends only on the monodromy permutation and descends to a crossed homomorphism
\[
 \bar c_\gamma:G\longrightarrow M_0.
\]

After tensoring with $\Q$, every crossed homomorphism of a finite group is a coboundary.  In the present convention this can be checked directly: set
\[
 S=\sum_{g\in G}\bar c_\gamma(g),\qquad m=\frac{S}{|G|}.
\]
Summing the cocycle identity $\bar c_\gamma(gh)=\bar c_\gamma(g)+g\bar c_\gamma(h)$ over $h\in G$ gives
\[
 \bar c_\gamma(g)=m-gm.
\]
If $g$ fixes the sheet $i$, then $(gm)_i=m_i$, and therefore
\[
 \bar c_{\gamma,i}(g)=0.                                      \tag{*}
\]

Now fix $p_i\in\pi^{-1}(b)$.  Let $\alpha$ be any loop on $X$, based at $p_i$, which avoids the finite set $\pi^{-1}(B)$.  Its projection $\ell=\pi\circ\alpha$ lies in $\PP^1\setminus B$, and the lift of $\ell$ from $p_i$ is precisely $\alpha$; in particular, $\rho(\ell)$ fixes $i$.  By~(*) and Lemma~\ref{lem:cocycle},
\[
 \alpha\cdot\gamma=0.
\]
Every class of $H_1(X,\Z)$ has such a representative: start with a cycle avoiding the finite set, join it to $p_i$, and conjugate it by the joining path.  Thus $[\gamma]$ has zero intersection with every class in $H_1(X,\Z)$.  Nondegeneracy of the intersection pairing on the compact oriented surface $X$ gives $[\gamma]=0$.
\end{proof}

\begin{lemma}[Irreducible augmentation gives injectivity]\label{lem:2trans}
If the augmentation representation $M_0\otimes\C$ of $G$ is irreducible and $f\notin\C(\pi)$, then
\[
 \ev_f:M_0\otimes\C\longrightarrow L
\]
is injective.
\end{lemma}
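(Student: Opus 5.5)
The plan is to show that the kernel of $\ev_f$ is a $G$-submodule of $M_0\otimes\C$, and then use irreducibility together with $f\notin\C(\pi)$ to rule out the case where it is everything. The Galois group $G=\operatorname{Gal}(L/\C(z))$ acts on $L$ and permutes the roots $f(q_1),\ldots,f(q_d)$ compatibly with its action on the sheets. Concretely, for $g\in G$ we have $g\bigl(f(q_i)\bigr)=f(q_{g i})$, with the same left-action convention as in Lemma~\ref{lem:cocycle}. From this,
\[
 g\Bigl(\sum_i a_i f(q_i)\Bigr)=\sum_i a_i f(q_{gi})=\sum_j a_{g^{-1}j}f(q_j)=\ev_f(ga).
\]
So $\ev_f$ is $G$-equivariant, where $G$ acts on $L$ by field automorphisms; these are $\C$-linear because $\C\subset\C(z)$ is fixed. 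It follows that $\ker\ev_f$ is a $\C$-linear $G$-stable subspace of $M_0\otimes\C$. By irreducibility it is either $0$, which is what we want, or all of $M_0\otimes\C$.

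It remains to exclude $\ker\ev_f=M_0\otimes\C$. In that case every vector $e_i-e_j$ lies in the kernel, so $f(q_i)=f(q_j)$ for all $i,j$. The element $f(q_1)\in L$ would then be fixed by every element of $G$, because $G$ only permutes the values $f(q_i)$, and all of these are equal. By Galois theory $f(q_1)$ lies in $\C(z)$. Under the identification $\C(X)\cong\C(z)(f(q_1))\subset L$, given by taking the branch at $q_1$, this says exactly that $f\in\C(\pi)$, contradicting the hypothesis. Equivalently: all branches of $f$ coincide, so the symmetric functions show that $f$ satisfies a linear equation over $\C(\pi)$.

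One small point needs care when $d=1$. There $M_0=0$ and injectivity is trivial, although the hypothesis $f\notin\C(\pi)$ cannot even hold in that case, so nothing is lost. The only genuinely delicate step is fixing the equivariance convention so that it matches $(ga)_j=a_{g^{-1}j}$. I expect this to be the main, if mild, obstacle. If the conventions end up producing a right action instead, the kernel is still $G$-stable, so the argument is unaffected.
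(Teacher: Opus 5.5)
Your proof is correct and follows the paper's argument: $\ker\ev_f$ is a $G$-submodule of $M_0\otimes\C$. Since $f\notin\C(\pi)$, the conjugates $f(q_i)$ are not all equal, so the kernel is proper and therefore zero by irreducibility; you also spell out the equivariance $g\,\ev_f(a)=\ev_f(ga)$ and the fixed-field step, which the paper leaves implicit. One cosmetic slip: the relevant map is the embedding $\C(X)\hookrightarrow L$ given by the branch at $q_1$ (sending $f$ to $f(q_1)$), not an isomorphism $\C(X)\cong\C(z)(f(q_1))$, and the argument goes through unchanged.
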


\begin{proof}
The kernel of $\ev_f$ is a $G$-submodule. Since $f\notin\C(\pi)$, not all conjugates $f_i$ are equal, so $\ev_f$ is nonzero on the augmentation module. Its kernel is therefore zero.
\end{proof}

\section{Proofs of the main criteria}\label{sec:proofs}

\begin{proof}[Proof of Theorem~\ref{thm:A}]
Proposition~\ref{prop:chain-res} gives
\[
 I_\gamma(z)=R_\Sigma(z)-\sum_i m_i f_i(z).
\]
Let $L$ be the Galois closure and $G=\operatorname{Gal}(L/\C(z))$, identified with the geometric monodromy group acting on the sheets. All conjugates of $I_\gamma$ are among
\[
 R_\Sigma(z)-\sum_i (gm)_i f_i(z),\qquad g\in G,
\]
so the degree is at most $|G\cdot m|$. Equivalently, an explicit annihilating polynomial is
\begin{equation}\label{eq:norm-poly}
 Q(z,Y)=\prod_{\mu\in G\cdot m}
 \left(Y-R_\Sigma(z)+\sum_i\mu_i f_i(z)\right)\in\C(z)[Y],
\end{equation}
where the product is taken over the orbit vectors; the resulting product is $G$-invariant.

If $gm\ne hm$, then $gm-hm\in M_0$. Injectivity of $\ev_f$ implies
\[
 \sum_i(gm-hm)_if_i\ne0,
\]
so all orbit expressions are distinct and the degree is $|G\cdot m|$. If $G=S_d$ and $f\notin\C(\pi)$, then the augmentation representation is irreducible, so Lemma~\ref{lem:2trans} supplies the required injectivity. The orbit size of a vector whose equal entries occur with multiplicities $\nu_1,\ldots,\nu_s$ is the stated multinomial coefficient. The $0$--$1$ case gives $\binom dk$.
\end{proof}

\begin{corollary}[The rational case inside the orbit formula]\label{cor:rational-orbit}
Under the hypotheses of Theorem~\ref{thm:A}, assume that $\ev_f$ is injective on the augmentation module.  Then $I_{\gamma,f}$ is rational if and only if the multiplicity vector $m$ is constant.  For a bordered side, this is equivalent to $k\in\{0,d\}$.  In the exterior Cauchy-transform setting, under the standard boundary regularity assumptions, this is precisely the quadrature-domain case.
\end{corollary}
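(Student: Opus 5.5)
We will deduce this from the proof of Theorem~\ref{thm:A}.

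By Proposition~\ref{prop:chain-res}, as a germ at the base point we have
\[
 I_{\gamma,f}=R_\Sigma-\sum_i m_i f_i,
\]
with $R_\Sigma\in\C(z)$. Hence $I_{\gamma,f}$ is rational exactly when $\sum_i m_i f_i\in\C(z)$, that is, exactly when this element of $L$ is fixed by every $g\in G$.

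\emph{If $m$ is constant.} Suppose $m=N(1,\ldots,1)$. Then $\sum_i m_i f_i=N\operatorname{Tr}_\pi(f)$, which lies in $\C(z)$. So $I_{\gamma,f}$ is rational. This is also the degree-one case of the orbit formula, since $|G\cdot m|=1$.

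\emph{If $I_{\gamma,f}$ is rational.} The degree computed in Theorem~\ref{thm:A} equals $1$, so $|G\cdot m|=1$ by the injectivity hypothesis. Concretely, for each $g$ the vector $gm-m$ lies in $M_0$ and satisfies $\ev_f(gm-m)=0$, so $gm=m$ by injectivity. Thus $m$ is $G$-invariant. Since $X$ is irreducible, $G$ is transitive, and a $G$-invariant vector under a transitive action is constant.

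\emph{Bordered side.} For a bordered side $\Sigma$, the vector $m$ is a $0$--$1$ vector with exactly $k$ ones. Such a vector is constant if and only if $k=0$ or $k=d$.

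\emph{Exterior Cauchy transform.} Here $f=\eta$, and on the exterior component the germ is the area Cauchy transform (see the remark after Remark~\ref{rem:chain-choice}). Under the standard boundary regularity assumptions, $\Omega$ is a quadrature domain if and only if $\ct_\Omega$ is rational on the exterior, as recalled in the introduction. Combining this with the equivalence above gives the stated identification.

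The only point needing care is that the base point $z_0$ used in Theorem~\ref{thm:A} lies in the exterior component, so that the germ in question is the actual Cauchy transform. Beyond that, the argument simply cites the quadrature-domain equivalence, and we do not expect a real obstacle.
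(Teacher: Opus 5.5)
Your proposal is correct and follows essentially the paper's route. Degree one in the orbit formula holds exactly when $m$ is $G$-invariant, which by transitivity means $m$ is constant, and the $0$--$1$ case gives $k\in\{0,d\}$. Your explicit checks, via $\operatorname{Tr}_\pi(f)$ and via $\ev_f(gm-m)=0$, just unpack what the paper cites from Theorem~\ref{thm:A}. For the quadrature-domain sentence, you rely on the same standard characterization that the paper takes as given, and you rightly note that $z_0$ must lie in the exterior component.
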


\begin{proof}
By Theorem~\ref{thm:A}, the algebraic degree is $|G\cdot m|$.  It equals one exactly when $m$ is $G$-invariant.  Since $G$ is transitive, its invariant vectors in $\C^d$ are the constant vectors.  For a bordered side, $m$ is a $0$--$1$ vector, so it is constant exactly when $k=0$ or $k=d$.
\end{proof}

\begin{proof}[Proof of Theorem~\ref{thm:B}]
If $[\gamma]=0$, then the simple closed curve $\gamma$ separates $X$ and bounds an integral two-chain. Proposition~\ref{prop:chain-res} shows that $I_\gamma$ is algebraic.

Conversely, suppose that $I_\gamma$ is algebraic. By~\cite[Theorem~4.5]{PRY}, its combinatorial monodromy $A_\gamma$ has finite image. The augmentation representation is irreducible and $f\notin\C(\pi)$, so Lemma~\ref{lem:2trans} applies. Lemma~\ref{lem:finite-null} then gives $[\gamma]=0$.
\end{proof}

\begin{remark}[Sharper evaluation hypotheses]\label{rem:sharp-evaluation}
Two-transitivity is a convenient verifiable sufficient condition.  Global injectivity of $\ev_f$ on the augmentation module is equivalent to linear independence in $L$ of
\[
 f_1-f_d,\ldots,f_{d-1}-f_d.
\]
The proof of Theorem~\ref{thm:B} needs still less: it suffices that $\ev_f$ be injective on the $G$-submodule of $M_0\otimes\C$ generated by the image of $c_\gamma$.  Thus the criterion can apply even when the full augmentation representation is reducible.
\end{remark}

\section{Consequences for real algebraic ovals}\label{sec:real}

\subsection{Generic projections}

Let the relevant projective plane curve be nonsingular. For a generic affine linear function, all finite critical points are simple and their critical values are distinct. The corresponding local monodromies are transpositions. Since the curve with the poles of $\pi$ and the ramification points removed is connected, these finite branch transpositions generate a transitive subgroup of $S_d$; their transposition graph is therefore connected, and the generated group is the full symmetric group $S_d$. This argument does not require the ramification over $\infty$ to be simple. Real affine coordinates form a real Zariski-dense family of complex affine linear functions, so the same conclusion holds after a generic real affine change of coordinates. For singular plane models we make no genericity assertion here; Theorem~\ref{thm:B} still applies whenever the actual projection monodromy has irreducible augmentation representation.

\begin{proof}[Proof of Corollary~\ref{cor:generic}]
For a generic real affine coordinate, the projection $\pi$ has monodromy $S_d$. Theorem~\ref{thm:B} gives the equivalence between algebraicity and separation.

A nonsingular real cubic with a compact affine oval has two real components on its elliptic normalization: the compact oval and the projective pseudoline; see, for example,~\cite{GrossHarris}. Each component is nonseparating, so the compact oval has generically nonalgebraic transform.

More generally, if $X(\R)$ has at least two components, no individual component separates $X$. Indeed, if one component separated, conjugation would interchange the two sides and no further fixed point could exist.

Finally, let a nonsingular real plane curve of degree $2k$ have exactly one oval. If it were dividing, Rokhlin's complex-orientation formula~\cite{Rokhlin}
\[
 2(\Pi_+-\Pi_-)=l-k^2
\]
would apply, where $l$ is the number of ovals and $\Pi_+$ and $\Pi_-$ are the numbers of positively and negatively nested pairs.  Here it gives $0=1-k^2$, because $l=1$ and there are no nested pairs. Hence $k=1$. For degree at least four the unique oval is nonseparating, and the generic nonalgebraicity conclusion follows.
\end{proof}

\subsection{Positive results}

\begin{corollary}[Rational normalization]\label{cor:rational}
Let $\Gamma$ be a smooth compact oval contained in the nonsingular locus of a real plane curve, and assume that the relevant irreducible component has rational normalization. Then its exterior Cauchy transform, and every polynomially weighted exterior transform, is algebraic.
\end{corollary}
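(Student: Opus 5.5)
The plan is to use the fact that a compact Riemann surface of genus zero has trivial first homology, and then apply the chain residue formula. Let $X$ be the normalization of the relevant irreducible component, so $X\cong\PP^1$. The lifted oval $\gamma\subset X$ is a smooth simple closed curve, because $\pi|_\gamma$ is a diffeomorphism onto $\Gamma$. Since $H_1(\PP^1,\Z)=0$, the class $[\gamma]$ vanishes. By the Jordan--Schoenflies theorem on the sphere, $\gamma$ bounds a closed disc $\Sigma\subset X$, which we orient so that $\bd\Sigma=\gamma$. No monodromy hypothesis is needed for this direction: only the elementary implication ``null-homologous $\Rightarrow$ algebraic'' from Proposition~\ref{prop:chain-res} and Theorem~\ref{thm:A} is used, not the converse in Theorem~\ref{thm:B}.

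Next I check the hypotheses of Proposition~\ref{prop:chain-res} for the relevant forms. For the unweighted transform take $f=\eta$. Along $\gamma$ we have $\eta=\bar\pi$, which is finite there, and $\pi$ is finite on $\gamma$ because $\Gamma\subset\C$ is compact. Hence $\eta\,d\pi$ has no poles on $\gamma$. For a polynomial density $\rho$, Proposition~\ref{prop:CG} gives the boundary form $R_\rho(\pi,\eta)\,d\pi$. Here $R_\rho$ is a polynomial in two variables, so $R_\rho(\pi,\eta)\in\C(X)$, and it is regular on $\gamma$ for the same reason. Proposition~\ref{prop:CG} together with the lifting identity \eqref{eq:intro-lift} (and its weighted analogue) expresses each exterior transform as $I_{\gamma,f}$ with the appropriate $f$.

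Finally, on the exterior component of $\C\setminus\Gamma$, Proposition~\ref{prop:chain-res} (equivalently \eqref{eq:A-intro}) gives
\[
I_{\gamma,f}(z)=R_\Sigma(z)-\sum_{i=1}^d m_i f(q_i(z)),
\]
with $R_\Sigma\in\C(z)$ and each $f(q_i(z))$ algebraic over $\C(z)$. So the germ is algebraic, and the polynomial \eqref{eq:norm-poly} is an explicit annihilator. Algebraicity of one germ passes to the whole exterior function by analytic continuation on the connected exterior domain, since the annihilating polynomial identity persists.

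The only point needing care is the degenerate case $\deg\pi=1$. Then $m$ is a scalar, the transform is rational, and in particular algebraic, so no separate treatment is required. I expect no real obstacle here; the main thing to verify is that regularity of $f\,d\pi$ along $\gamma$ holds for all polynomial weights, which follows from finiteness of $\pi$ and $\eta$ on the compact oval.
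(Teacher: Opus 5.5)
Your proposal is correct and follows the paper's proof: the lifted oval is null-homologous on $X\cong\PP^1$, so it bounds an integral two-chain, and Proposition~\ref{prop:chain-res} with $f=\eta$ or $f=R_\rho(\pi,\eta)$ gives algebraicity. Your additional checks are correct and only make explicit what the paper's two-line proof leaves implicit: regularity of $f\,d\pi$ along $\gamma$, the bounding disc via Schoenflies (an integral two-chain would suffice), and the case $\deg\pi=1$.
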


\begin{proof}
A closed curve on $\PP^1$ bounds an integral two-chain. Apply Proposition~\ref{prop:chain-res} with $f=\eta$, or with $f=R_\rho(\pi,\eta)$.
\end{proof}

\begin{corollary}[Complete real locus of a dividing curve]\label{cor:dividing}
Suppose $X$ is dividing and the lifted boundary of a bounded plane domain is the complete real locus $X(\R)$ with the complex orientation of one half of $X\setminus X(\R)$. Then the exterior Cauchy transform and every polynomially weighted exterior transform are algebraic.
\end{corollary}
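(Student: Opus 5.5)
The plan is to exhibit the lifted boundary as the boundary of an integral two-chain on $X$ and then apply Proposition~\ref{prop:chain-res}, in the same way as in the proof of Corollary~\ref{cor:rational}.

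Since $X$ is dividing, $X\setminus X(\R)$ has exactly two connected components $X_+$ and $X_-$, and complex conjugation interchanges them. Each closure $\overline{X_+}$ is a compact bordered surface whose boundary, as a set, is all of $X(\R)$. We orient $X(\R)$ by the complex orientation, that is, as the oriented boundary $\bd\overline{X_+}$ of the chosen half. The hypothesis says that the lifted boundary $\gamma$, with the orientation induced from the positive orientation of $\bd\Omega$ via $\pi|_\gamma$, coincides with this oriented cycle. So $\gamma=\bd\Sigma$ with $\Sigma=\overline{X_+}$, or with $-\overline{X_-}$ if the orientation conventions are reversed. Either way $\Sigma$ is an integral two-chain obtained from a triangulation of the half. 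This identification of orientations is the only point that needs care, and it is harmless: replacing $\Sigma$ by $-\overline{X_-}=\overline{X_+}-[X]$ changes nothing essential, by Remark~\ref{rem:chain-choice}.

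Next we check the pole hypothesis of Proposition~\ref{prop:chain-res}. The lifted boundary lies over a bounded Jordan curve in the nonsingular locus, where $\pi$ is finite and $\eta=\bar\pi$ is finite. Hence $f\,d\pi$, with $f=\eta$ or $f=R_\rho(\pi,\eta)$ for a polynomial $R_\rho$, has no poles on $\gamma$.

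Proposition~\ref{prop:chain-res} then gives, for $z$ outside $\pi(\gamma)$,
\[
 I_{\gamma,f}(z)=R_\Sigma(z)-\sum_i m_i f(q_i(z)),
\]
with $m_i\in\{0,1\}$. Each $f(q_i(z))$ is algebraic over $\C(z)$ because $f\in\C(X)$ and $X\to\PP^1$ is finite, and $R_\Sigma$ is rational, so $I_{\gamma,f}$ is algebraic. Equivalently, Theorem~\ref{thm:A} supplies the explicit annihilating polynomial \eqref{eq:norm-poly}. On the exterior component of $\C\setminus\Gamma$, Proposition~\ref{prop:CG} identifies $I_{\gamma,\eta}$ with $\ct_\Omega$ and $I_{\gamma,R_\rho}$ with the polynomially weighted transform. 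An algebraic germ on one component determines an algebraic function, so the exterior transforms are algebraic.
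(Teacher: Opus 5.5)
Your proposal is correct and follows the same route as the paper: take the chosen half of $X\setminus X(\R)$ (or its conjugate with reversed sign, since $-\overline{X_-}=\overline{X_+}-[X]$) as the integral two-chain bounded by the lifted boundary, and apply the chain residue formula of Proposition~\ref{prop:chain-res}. Your explicit verification that $f\,d\pi$ has no poles on $\gamma$ and your orientation bookkeeping spell out what the paper's two-line proof leaves implicit.
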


\begin{proof}
Choose the half of $X\setminus X(\R)$ whose induced boundary orientation agrees with the positive orientation of the plane boundary. It is an integral two-chain with boundary $X(\R)$. Choosing the other half changes the sign of the boundary integral and does not affect algebraicity.
\end{proof}

\begin{remark}
For an affine complete real locus to be a compact planar boundary, the projective degree must be even and the affine chart must avoid real points at infinity on the chosen boundary. The statement concerns the complete oriented real locus; an individual component of a multi-component dividing curve is nonseparating.
\end{remark}

\subsection{The ellipse as a sharp degree example}

For the ellipse with semiaxes $a>b>0$ and $c^2=a^2-b^2$,
\[
 \ct_\Omega(z)=\frac{2ab}{c^2}\left(z-\sqrt{z^2-c^2}\right),
 \qquad \sqrt{z^2-c^2}\sim z\quad(z\to\infty).
\]
This is also a direct check of the incomplete-trace formula.  Put
\[
 A=\frac{a+b}{2},\qquad B=\frac{a-b}{2},
\]
and parametrize the Schwarz conic by
\[
 \pi(\lambda)=A\lambda+B\lambda^{-1},
 \qquad
 \eta(\lambda)=B\lambda+A\lambda^{-1}.
\]
For exterior $z$, the two roots of $\pi(\lambda)=z$ satisfy $|\lambda_1\lambda_2|=B/A<1$, and exactly one, say $\lambda_2$, lies in $\Sigma=\{|\lambda|<1\}$.  The fixed pole at $\lambda=0$ contributes
\[
 R_\Sigma(z)=\frac ABz=\frac{a+b}{a-b}z,
\]
and a direct simplification gives
\[
 R_\Sigma(z)-\eta(\lambda_2)
 =\frac{2ab}{c^2}\left(z-\sqrt{z^2-c^2}\right).
\]
Here $d=2$, the monodromy is $S_2$, and $k=1$, so Theorem~\ref{thm:A} gives the exact degree $\binom21=2$; Corollary~\ref{cor:rational-orbit} simultaneously shows why the transform is not rational.

\section{Explicit positive-genus domains beyond quadrature domains}\label{sec:examples}

Fix $h\ge1$ and put
\[
 n=2h+1.
\]
Let $X_h$ be the smooth projective normalization of
\begin{equation}\label{eq:hyp}
 y^2=x^{2n}+1.
\end{equation}
It is hyperelliptic of genus $n-1=2h$. Since $x^{2n}+1>0$ on $\R$, the real locus has no finite branch point. There are two real points over infinity.

Define
\begin{equation}\label{eq:tr}
 t=y+x^n,\qquad t^{-1}=y-x^n,
 \qquad r=\frac{x}{1+x^2}.
\end{equation}
The identity for $t^{-1}$ follows from $(y+x^n)(y-x^n)=1$.  Globally on $X_h$,
\[
 x^n=\frac{t-t^{-1}}2,
 \qquad y=\frac{t+t^{-1}}2.
\]
Since $n$ is odd, $t$ identifies $X_h(\R)$ bijectively with $\R\PP^1$. Thus the real locus has one component. It is dividing: the preimages of the upper and lower half-planes under $x:X_h\to\PP^1$ are connected and form the two components of $X_h\setminus X_h(\R)$. Connectivity follows because each half-plane contains at least one branch point of the hyperelliptic double cover.

Put
\[
 U(t)=\frac{t^2-1}{t^2+1},
 \qquad V(t)=\frac{2t}{t^2+1},
 \qquad s=1+\varepsilon r,
\]
and define real meromorphic functions
\begin{equation}\label{eq:uv}
 u=sU(t),\qquad v=sV(t),
 \qquad z=u+iv,\qquad w=u-iv.
\end{equation}
The rational identity
\[
 (U+iV)(U-iV)=U^2+V^2=1
\]
holds on all of $X_h$.  Consequently
\begin{equation}\label{eq:zw-squared}
 zw=s^2,\qquad w=\frac{s^2}{z}.
\end{equation}
On $X_h(\R)$ one has $|r|\le1/2$, so $s>0$ when $|\varepsilon|<2$.

\begin{lemma}[The boundary is a smooth star-shaped Jordan curve]\label{lem:smooth}
Let $0<|\varepsilon|<2$. Then $z|_{X_h(\R)}$ is an injective immersion. Hence
$\Gamma_{h,\varepsilon}:=z(X_h(\R))$ is a real-analytic Jordan curve, star-shaped
with respect to the origin, and $z:X_h(\R)\to\Gamma_{h,\varepsilon}$ is a
diffeomorphism.
\end{lemma}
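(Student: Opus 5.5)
Parametrize $X_h(\R)$ by the real coordinate $t\in\R\PP^1$; as noted above, $t$ is a real-analytic diffeomorphism $X_h(\R)\to\R\PP^1$. Then write $U+iV=(t+i)/(t-i)=e^{i\theta}$ with $\theta=2\operatorname{arccot}$-type angle. Concretely, $t=\cot(\theta/2)$, and $t\mapsto e^{i\theta(t)}$ is a real-analytic diffeomorphism $\R\PP^1\to S^1$ with $\theta$ strictly monotone and $d\theta/dt\neq0$. On $X_h(\R)$ we have $z=s\,e^{i\theta}$ with $s=1+\varepsilon r>0$, since $|r|\le 1/2$ and $|\varepsilon|<2$. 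Thus in polar form $z|_{X_h(\R)}$ is $\theta\mapsto \rho(\theta)e^{i\theta}$ with $\rho=s\circ t(\theta)$ a positive real-analytic function on the circle.

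Injectivity follows at once. Two points with the same $z$ have the same argument $\theta$ modulo $2\pi$, because $s>0$. Since $\theta$ is a bijective coordinate on $X_h(\R)$, they coincide. Star-shapedness with respect to $0$ is the same statement: each ray from the origin meets $\Gamma_{h,\varepsilon}$ in exactly one point.

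For the immersion, differentiate in $\theta$: $dz/d\theta=(\rho'+i\rho)e^{i\theta}$, which never vanishes because $\rho>0$. Combined with $d\theta/dt\ne0$, and with the fact that $t$ is a valid real-analytic chart along all of $X_h(\R)$, this shows $dz$ is nonvanishing along the real locus. I must still check that $r$, and hence $s$, is real-analytic (pole-free) on $X_h(\R)$ in the $t$-coordinate. This holds because $1+x^2>0$ on the real affine part, and $r\to0$ at the two real points over infinity. At those two points ($t=0,\infty$) I will work in the local coordinate $t$ or $1/t$ there, using $x^n=(t-t^{-1})/2$. The dependence $x\sim (t/2)^{1/n}$ is a real-analytic bijection because $n$ is odd, and $r=x/(1+x^2)$ is then analytic in $1/x$ near $x=\infty$.

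Therefore $z:X_h(\R)\to\C$ is an injective real-analytic immersion of a compact circle. Its image is a real-analytic Jordan curve, and $z$ onto the image is a diffeomorphism, since an injective immersion of a compact manifold is an embedding. The main obstacle is the regularity of $s\circ t$ near the points at infinity, where the odd-root relation between $x$ and $t$ must be handled. I expect $x$ to be a real-analytic function of $t$ on all of $\R\PP^1$, because $t$ is a global real coordinate on the smooth real curve. In that case the claim reduces to noting that $r$ is a meromorphic function on $X_h$ with no real poles.
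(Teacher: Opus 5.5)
\textbf{There is a genuine gap in the immersion step.} Your injectivity and star-shapedness arguments are fine, because they only use that $t$ maps $X_h(\R)$ bijectively onto $\R\PP^1$.

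The immersion argument, however, rests on a false premise. $t$ is a real-analytic homeomorphism $X_h(\R)\to\R\PP^1$, but it is not a diffeomorphism; the text before the lemma only asserts bijectivity. In $x^n=(t-t^{-1})/2$ the right side has simple zeros at $t=\pm1$ and simple poles at $t=0,\infty$. Hence $t$ is totally ramified of index $n$ at four real points: the two points with $x=0$ (where $t=\pm1$) and the two points over $x=\infty$ (where $t=0,\infty$).

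For instance, on the branch $y>0$ one has $t-1=x^n+O(x^{2n})$. So $dt=0$ at $x=0$, and $x\sim(t-1)^{1/n}$ is not even $C^1$ in $t$. Similarly $1/x\sim c\,t^{\pm1/n}$ near $t=0,\infty$. Your expectation that $x$ is real-analytic in $t$ on all of $\R\PP^1$ therefore fails: a real-analytic bijection such as $x\mapsto x^3$ need not have an analytic inverse. You also overlooked the two points $t=\pm1$.

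As a consequence, the radial function $\rho(\theta)$, i.e.\ $s$ written in terms of $\theta$, is not real-analytic. Near the four angles $\theta_0\in\{0,\pi/2,\pi,3\pi/2\}$ one has $\rho-1\sim c(\theta-\theta_0)^{1/n}$, so $\rho'$ blows up. The formula $dz/d\theta=(\rho'+i\rho)e^{i\theta}$ then says nothing at those points.

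A sanity check confirms the argument cannot be right as written. It never uses $\varepsilon\neq0$. Yet for $\varepsilon=0$ one has $z=\phi(t)$ on $X_h(\R)$, and $dz=\phi'(t)\,dt$ vanishes to order $n-1$ at these four points, so $z$ is not an immersion.

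The missing step is to work in genuine local coordinates on $X_h(\R)$: $x$ on the affine part and $\xi=1/x$ at the points over infinity. Write $z=s\phi$ with $\phi=e^{i\theta}$. Then $dz=\phi(ds+is\,d\theta)$ with $ds$ and $d\theta$ real, so $dz=0$ forces $d\theta=0$, because $s>0$. Since $d\theta=(d\theta/dt)\,dt$ with $d\theta/dt\neq0$, this happens exactly at the four ramification points of $t$. There $ds=\varepsilon\,dr$ equals $\varepsilon\,dx$ at $x=0$, or $\varepsilon\,d\xi$ over $x=\infty$. This is nonzero precisely because $\varepsilon\neq0$.

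This is the paper's argument. It also explains why $\Gamma_{h,\varepsilon}$ is a smooth curve that is tangent to the radial direction at four points, rather than the polar graph of a smooth function of $\theta$.
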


\begin{proof}
Put
\[
 \phi=\frac{t+i}{t-i},
\]
so that $U+iV=\phi$ and $z=s\phi$. On $X_h(\R)$ the function $t$ is real, hence $|\phi|=1$; write $\phi=e^{i\theta}$. Since $t\mapsto\phi$ is a M\"obius transformation carrying $\R\PP^1$ diffeomorphically onto the unit circle, and $t$ identifies $X_h(\R)$ with $\R\PP^1$ bijectively, the map $z=se^{i\theta}$ with $s>0$ is injective, and its image meets each ray from the origin in exactly one point. Thus the image is star-shaped.

For immersivity, $s$ and $\theta$ are real valued on $X_h(\R)$, and
\[
 dz=\phi\bigl(ds+is\,d\theta\bigr).
\]
Comparing real and imaginary parts, $dz$ can vanish only if $ds=d\theta=0$. Now $d\theta=0$ precisely at the four ramification points of $t$: the two points with $x=0$, where $t=\pm1$, and the two points over $x=\infty$, where $t=0,\infty$. At each of them $t$ is totally ramified of index $n$. At a point with $x=0$, the function $x$ is a local coordinate and
\[
 ds=\varepsilon\,dr
 =\varepsilon\frac{1-x^2}{(1+x^2)^2}\,dx
 =\varepsilon\,dx\ne0.
\]
At a point over $x=\infty$, the function $\xi=1/x$ is a local coordinate, $r=\xi/(1+\xi^2)$, and $ds=\varepsilon\,d\xi\ne0$. Hence $dz\ne0$ everywhere on $X_h(\R)$.
\end{proof}

\begin{remark}\label{rem:polar-regularity}
The condition $\varepsilon\ne0$ enters both Lemma~\ref{lem:birational} and Lemma~\ref{lem:smooth}. In the latter it is exactly what makes $z$ an immersion at the four real ramification points of $t$; for $\varepsilon=0$, $dz$ vanishes there to order $n-1$.  Because $t$ ramifies at these points, $s$ is continuous but not $C^1$ as a function of the polar angle: locally $s-1\sim c(\theta-\theta_0)^{1/n}$. Thus $\Gamma_{h,\varepsilon}$ is a smooth curve tangent to the radial direction at four points, not the graph of a smooth function of $\theta$.
\end{remark}

\begin{lemma}[Birationality of the explicit projection]\label{lem:birational}
For every $\varepsilon\ne0$, the pair $(u,v)$ generates $\C(X_h)$. Consequently the irreducible plane curve traced by $(z,w)$ has normalization $X_h$.
\end{lemma}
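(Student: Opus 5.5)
The plan is to work with $z=u+iv$ and $w=u-iv$, which generate the same field $F=\C(u,v)=\C(z,w)$, and to show $F=\C(X_h)$. From \eqref{eq:uv} and the factorization used in the proof of Lemma~\ref{lem:smooth}, $z=s\phi$ and $w=s/\phi$ with $\phi=(t+i)/(t-i)$. Hence $F$ contains $zw=s^2$ and $z/w=\phi^2$.

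\emph{Step 1: structure of $\C(X_h)$ over $\C(t)$.} By \eqref{eq:tr}, $y=(t+t^{-1})/2\in\C(t)$ and $x^n=(t-t^{-1})/2\in\C(t)$. So $\C(X_h)=\C(t)(x)$ is a Kummer extension. Since $t$ is totally ramified of index $n$ at its four ramification points, it has degree $n$ with cyclic Galois group generated by $x\mapsto\zeta x$, $t\mapsto t$, where $\zeta=e^{2\pi i/n}$. I will show that $r=x/(1+x^2)$ is moved by every nontrivial element. The equality $\zeta^k x/(1+\zeta^{2k}x^2)=x/(1+x^2)$ forces $\zeta^k=1$ and $\zeta^{2k}=1$, by comparing coefficients. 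Hence, by the Galois correspondence, $\C(t,r)=\C(X_h)$. It therefore suffices to show that $t\in F$ and $r\in F$. Because $\varepsilon\ne0$, $r\in F$ is equivalent to $s\in F$.

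\emph{Step 2: reduce to $s\in F$.} If $s\in F$, then $\phi=z/s\in F$, so $t\in F$ (a M\"obius function of $\phi$), and $r=(s-1)/\varepsilon\in F$. Step 1 then gives $F=\C(X_h)$. Suppose instead $s\notin F$. Then $F(s)$ contains $\phi$ and $r$, so $F(s)=\C(X_h)$ and $[\C(X_h):F]=2$, because $s^2\in F$. Let $\sigma$ be the nontrivial automorphism of this quadratic extension. It satisfies $\sigma(s)=-s$ and fixes $z,w$. Therefore $\sigma(\phi)=-\phi$, which means $\sigma$ preserves $\C(t)$ and acts there by $t\mapsto-1/t$. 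It also satisfies $\sigma^*r=-r-2/\varepsilon$.

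\emph{Step 3: contradiction from divisors (the main point).} The zeros of $r$ lie exactly over $x\in\{0,\infty\}$, i.e.\ over $t\in\{\pm1,0,\infty\}$, the four total ramification points of $t$. The zero set of $\sigma^*r$ is $\sigma^{-1}$ of the zero set of $r$. Since $\sigma$ induces $t\mapsto-1/t$, which permutes $\{0,\infty,1,-1\}$, this set again lies over $t\in\{0,\infty,\pm1\}$. Each of those $t$-values has a single preimage, so it is the same four points. At these points $r=0$, yet $\sigma^*r=0$ there means $r=-2/\varepsilon$, which is impossible for finite $\varepsilon\ne0$. Hence $s\in F$ and $F=\C(X_h)$.

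Finally, since $(u,v)$, equivalently $(z,w)$, generate the function field, the map $X_h\to$ (plane curve traced by $(z,w)$) is birational. That curve is therefore irreducible with normalization $X_h$. The step I expect to need most care is Step 3: checking that $\sigma$ really restricts to $\C(t)$ as $t\mapsto-1/t$ (it does, since $\phi\in\C(X_h)$ is determined and $\C(t)=\C(\phi)$), and bookkeeping the zero divisor of $r$ on $X_h$.
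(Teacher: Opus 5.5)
Your proof is correct and follows essentially the same route as the paper. The same Kummer argument gives $\C(t,r)=\C(X_h)$, the index $[\C(X_h):\C(u,v)]$ is at most two, and the putative involution, acting by $t\mapsto -t^{-1}$ and $s\mapsto -s$, is ruled out at the points with $x=0$, where $r$ and its transform would both have to vanish. The differences are cosmetic: you get the degree bound from $s^2=zw$ rather than from $v/u=2t/(t^2-1)$, and you locate the zeros of $\sigma^*r$ through the totally ramified fibres of $t$ rather than writing the lift explicitly as $x\mapsto\zeta x$.
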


\begin{proof}
From \eqref{eq:tr},
\[
 \C(X_h)=\C(t,x),\qquad x^n=\frac{t-t^{-1}}2.
\]
The function $(t-t^{-1})/2$ has simple zeros and poles, hence is not an $n$th power in $\C(t)$. Thus the Kummer extension over $\C(t)$ has degree $n$, with conjugates $x\mapsto\zeta x$, $\zeta^n=1$. The conjugates
\[
 \frac{\zeta x}{1+\zeta^2x^2}
\]
of $r$ are pairwise distinct. Indeed, equality for $\zeta_1$ and $\zeta_2$ gives
\[
 (\zeta_1-\zeta_2)(1-\zeta_1\zeta_2x^2)=0,
\]
and $x^2$ is nonconstant in $\C(X_h)$; hence $\zeta_1=\zeta_2$. Therefore $\C(t,r)=\C(t,x)=\C(X_h)$.

The quotient
\[
 \frac vu=\frac{V(t)}{U(t)}=\frac{2t}{t^2-1}
\]
shows that $t$ has degree at most two over $\C(u,v)$. Once $t$ is adjoined, $s=u/U(t)$ and hence $r=(s-1)/\varepsilon$ are recovered. Therefore
\[
 [\C(X_h):\C(u,v)]\le2.
\]
If this degree were two, its nontrivial automorphism would send $t$ to $-t^{-1}$; every lift to $X_h$ has the form
\[
 t\mapsto-t^{-1},\qquad x\mapsto\zeta x,
 \qquad \zeta^n=1.
\]
Under this map both $U$ and $V$ change sign. Fixing $u$ and $v$ would force
\[
 1+\varepsilon\frac{\zeta x}{1+\zeta^2x^2}
 =-1-\varepsilon\frac{x}{1+x^2}
\]
as an identity on $X_h$. At $x=0$ the two sides are $1$ and $-1$, a contradiction. Hence the degree is one.
\end{proof}

\begin{proof}[Proof of Theorem~\ref{thm:C}]
Let $\Gamma_{h,\varepsilon}$ be the real-analytic Jordan curve supplied by Lemma~\ref{lem:smooth}, and let $\Omega_{h,\varepsilon}$ be its bounded interior. Elimination of $x,y$ gives an irreducible algebraic relation $P(z,w)=0$, and on the real locus $w=\bar z$. Lemma~\ref{lem:birational} shows that the normalization of the relevant Schwarz component is $X_h$, of genus $2h$.

The real locus is the boundary of either half of the dividing curve $X_h$. Choose the half whose induced boundary orientation maps to the positive orientation of $\Gamma_{h,\varepsilon}$. Proposition~\ref{prop:chain-res} then proves that $\ct_{\Omega_{h,\varepsilon}}$ is algebraic; choosing the other half would merely change the sign of the boundary integral.

Suppose it were rational. By the standard characterization of quadrature domains~\cite{AharonovShapiro,Sakai,GustafssonShapiro}, the simply connected domain $\Omega_{h,\varepsilon}$ would be a quadrature domain. Its Schwarz correspondence along the analytic boundary would then have rational normalization. That rational component and the irreducible component parametrized by $X_h$ contain the same real-analytic boundary arc, so they coincide. This contradicts the positive genus of $X_h$. Thus the transform is nonrational.
\end{proof}

\begin{remark}[Optimal parity]
For a dividing real curve of genus $g$ with $r$ real components, Klein's congruence (see~\cite{GrossHarris}) gives $r\equiv g+1\pmod2$. Hence a dividing curve with one real component has even genus. The family above realizes every positive genus allowed by this constraint.
\end{remark}

\section{Concluding remarks}

The general theorem of Pakovich--Roytvarf--Yomdin characterizes algebraicity of a Cauchy-type integral through finite combinatorial monodromy \cite{PRY}.  Theorem~\ref{thm:B} evaluates that criterion geometrically in the oval setting.  Under irreducible augmentation monodromy, the evaluated branch-sum cocycle is finite precisely when the lifted boundary has zero homology class.  Thus the obstruction to algebraicity is global: it is the intersection cocycle of the lifted oval with closed curves on the normalization.

From the potential-theoretic viewpoint, classical quadrature domains are exactly the domains for which the exterior Cauchy transform is rational, under the usual regularity hypotheses.  Theorem~\ref{thm:C} shows that replacing rationality by algebraicity produces a much larger class: there are simply connected real-algebraic domains whose exterior Cauchy transforms have finite nontrivial branching and whose Schwarz normalizations have arbitrary positive even genus.  Theorem~\ref{thm:A} makes this finite branching effective by identifying the transform with an incomplete trace and computing its degree from the monodromy orbit of the side multiplicity vector.

The principal contribution of the paper is therefore the equivalence
\[
 \begin{gathered}
 \text{algebraicity of the exterior Cauchy transform}\\
 \Longleftrightarrow\\[-2pt]
 \text{separation by the lifted oval}.
 \end{gathered}
\]
valid for the broad, and generically occurring, class of projections with two-transitive monodromy.  It converts a finiteness property of an exterior logarithmic potential into a topological property of the compact Schwarz correspondence.

\section*{Data availability}
No datasets were generated or analysed during the current study.

\section*{Statements and declarations}
\noindent\textbf{Funding.} The authors declare that no funds, grants, or other support were received during the preparation of this manuscript.\par
\noindent\textbf{Competing interests.} The authors have no relevant financial or non-financial interests to disclose.\par
\noindent\textbf{Author contributions.} Both authors contributed to the conception of the study, the mathematical development, and the writing of the manuscript. Both authors read and approved the final manuscript.\par

\section*{Acknowledgements} 
 The second author is sincerely grateful to Victor Vassiliev for his interest and discussions of this topic.  He also thanks Eugene Shustin for explanations of the Rokhlin--Mishachev complex-orientation formula. GPT 5.6 Sol has been used to polish the original manuscript and strengthen several formulations. All AI-generated proofs have been manually checked.

\end{document}